\newcommand{\cF}{{\mathscr F}}
\newcommand{\cO}{{\mathcal O}}
\DeclareMathOperator{\prox}{prox}
\DeclareMathOperator{\ran}{ran}
\DeclareMathOperator{\dom}{dom}
\newcommand{\eqdef}{\coloneqq}
\newcommand{\cX}{{\mathcal X}}
\newcommand{\cZ}{{\mathcal Z}}
\newcommand{\cY}{{\mathcal Y}}
\newcommand{\bR}{{\mathbb R}}
\newcommand{\bP}{{\mathbb P}}
\newcommand{\bE}{{\mathbb E}}
\newcommand{\cL}{{{\mathscr L}}}
\newcommand{\ps}[1]{\langle #1 \rangle}
\newcommand{\Span}{\mathop{\mathrm{span}}\nolimits}
\DeclareMathOperator*{\argmin}{arg\,min}
\DeclareMathOperator*{\minimize}{minimize}
\definecolor{mydarkblue}{rgb}{0,0.0,0.8}
\definecolor{mydarkgreen}{rgb}{0,0.6,0.0}
\newtheorem{lemma}{Lemma}[section]
\newtheorem{theorem}{Theorem}[section]
\newtheorem{proposition}{Proposition}[section]
\newtheorem{assumption}{Assumption}
\newtheorem{remark}{Remark}[section]
\begin{document}

\title{\textbf{Dualize, Split, Randomize: \\Toward Fast Nonsmooth Optimization Algorithms}}

\author{Adil Salim, \ Laurent Condat,  \ Konstantin Mishchenko, \ Peter Richt\'arik\\
\phantom{xx}
 \\
 King Abdullah University of Science and Technology (KAUST)\\
 Thuwal 23955-6900, Kingdom of Saudi Arabia}

 \date{Authors' final version. 
Published in Journal of Optimization Theory and Applications, July 2022. \url{https://doi.org/10.1007/s10957-022-02061-8}}

\maketitle

\begin{abstract}
    We consider minimizing the sum of three convex functions, where the first one $F$ is smooth, the second one is nonsmooth and proximable and the third one is the composition of a nonsmooth proximable function with a linear operator $L$. This template problem has many applications, for instance, in image processing and machine learning. First, we propose a new primal--dual algorithm, which we call PDDY, for this problem. It is constructed by applying Davis--Yin splitting to a monotone inclusion in a primal--dual product space, where the operators are monotone under a specific metric depending on $L$. We show that three existing algorithms (the two forms of the Condat--V\~u algorithm and the PD3O algorithm) have the same structure, so that PDDY is the fourth missing link in this self-consistent class of primal--dual algorithms. This representation  eases the convergence analysis: it allows us to derive sublinear convergence rates in general, and linear convergence results in presence of strong convexity. Moreover, within our broad and flexible analysis framework, we propose new 
stochastic generalizations of the algorithms, in which a variance-reduced random estimate of the gradient of $F$ is used, instead of the true gradient. 
Furthermore, 
we obtain, as a special case of PDDY, a linearly converging algorithm for the minimization of a strongly convex function $F$ under a linear constraint; 
we discuss its important application to decentralized optimization.
\end{abstract}

\section{Introduction}
Many problems in statistics, machine learning or signal processing 
can be formulated as high-dimensional convex optimization problems~\cite{pal09, sta10, bac12, pol15, cha16, sta16}. They typically involve a smooth term $F$ and a nonsmooth regularization term $G$, and $F+G$ is often minimized using (a variant of) 
Stochastic Gradient Descent (SGD)~\cite{lan2020first}. 
However, in many cases, $G$ is not proximable; that is, its proximity operator does not admit a closed-form expression. 
In particular, structured regularization functions, like the total variation or its variants for images or graphs~\cite{rud92,bre10,cou13,con14,dur16,con17,sal-bia-hac-(sub)tac17}, 
or the overlapping group lasso~\cite{bac12}, are known to have computationally  expensive proximity operators. 
Also, when $G$ is a sum of several regularizers, $G$ is not proximable, even if the individual regularizers are, in general~\cite{pus17}. 
{Thus, in many situations,} $G$ is not proximable, but it takes the form $G = R + H \circ L$ where $R$, $H$ are proximable and $L$ is a linear operator. 
Therefore, in this paper, we study the problem 
\begin{equation}
    \label{eq:original-pb}
  \textbf{Problem (1)}:\ \ \   \minimize_{x\in\mathcal{X}}\, \Big(F(x)+R(x)+H(Lx)\Big),
\end{equation}
{where $L: \mathcal{X}\rightarrow \mathcal{Y}$  is a linear operator, $\mathcal{X}$ and $\mathcal{Y}$ are real Hilbert spaces (all spaces are supposed of finite dimension), $F:\mathcal{X}\rightarrow \mathbb{R}$ is a convex function, $R:\mathcal{X}\rightarrow \mathbb{R}\cup\{+\infty\}$ and $H:\mathcal{Y}\rightarrow \mathbb{R}\cup\{+\infty\}$ are proper, convex, lower semicontinuous functions; we refer to textbooks like \cite{boy04,bau17} for standard definitions of convex analysis. $F$ is supposed to be $\nu$-smooth, for some $\nu>0$; that is,  it is differentiable on $\mathcal{X}$ and its gradient $\nabla F$ is $\nu$-Lipschitz continuous: $\|\nabla F(x)-\nabla F(x')\|\leq \nu \|x-x'\| $, for every $(x,x') \in\mathcal{X}^2$.}

{Our contributions are the following. We recast Problem~\eqref{eq:original-pb} as finding a zero of the sum of three operators, which are monotone in a primal--dual product space, under a particular metric (Sect.~\ref{sec2}). Then, we apply Davis--Yin splitting (DYS)~\cite{dav17}, a generic method for this type of monotone inclusions (Sect.~\ref{sec3}). By doing so, we recover the existing PD3O~\cite{yan18} and two forms of the Condat--V\~u~\cite{con13,vu13} algorithms, but we also discover a new one, which we call the Primal--Dual Davis--Yin (PDDY) algorithm (Sect.~\ref{sec:pdalgos}). In other words, we discover PDDY as the fourth ``missing link'' in a group of primal--dual algorithms, which is self-consistent, in the sense that by exchanging the roles of the primal and dual terms $R+H\circ L$ and $R^* \circ (-L^*)  + H^*$, or by exchanging the roles of two monotone operators in the construction, we recover this or that algorithm. Furthermore, the decomposition of the primal--dual monotone inclusion into three terms allows us to use an important inequality regarding DYS for the analysis of the algorithms. More precisely, we can apply Lemma~\ref{lem:funda-DYS}, by instantiating the monotone operators and 
inner product with the ones at hand. Thanks to this property, we can  easily replace the gradient $\nabla F$ by a stochastic variance-reduced (VR) estimator, which can be much cheaper to evaluate (Sect.~\ref{sec:grad-estimators}). Thus, we derive the first VR stochastic algorithms to tackle Problem~\eqref{eq:original-pb}, to the best of our knowledge. We also leverage the DYS representation of the algorithms to prove convergence rates; our analysis covers the deterministic and stochastic cases in a unified way (Sect.~\ref{sec:grad-estimators}). Moreover, as a byproduct of our analysis, we discover the first
linearly converging algorithm for the minimization of a smooth strongly convex function, {using its gradient,} under a linear constraint, {without projections on it} (Sect.~\ref{sec7}). Its application to decentralized optimization is discussed in Appendix~\ref{apppridec}. 
Finally, numerical experiments illustrating the performance of the algorithms are presented in Sect.~\ref{sec:exp}. 
 A part of the proofs is deferred to Appendix~\ref{secappa} and additional linear convergence results are derived in Appendix~\ref{secappb}.
}

\subsection{Related Work} 

\textbf{Splitting algorithms}: Algorithms allowing to solve nonsmooth optimization problem involving several  proximable terms are called proximal splitting algorithms~\cite{com10,bot14,par14,kom15,bec17,con19,com21}. A classical one is the Douglas--Rachford algorithm~\cite{lio79, eck92,sva11,ryu20} (or, equivalently, the ADMM~\cite{glo75,gab76,boy11}) 
to minimize the sum of two nonsmooth functions $R+H$. To minimize $G = R+H \circ L$, the Douglas--Rachford algorithm can be generalized to the Primal--Dual Hybrid Gradient (PDHG) algorithm, a.k.a.\ Chambolle--Pock algorithm~\cite{cha11a,oco20}. Behind its success is the ability to handle the composite term $H \circ L$ using separate activation of $L$, its adjoint operator $L^T$, and the proximity operator of $H$.
 However, in many 
 applications, the objective function involves a smooth function 
 $F$, for instance, a least-squares term or a sum of logistic losses composed with inner products.  To cover these applications, proximal splitting algorithms like the {Combettes--Pesquet~\cite{com12}}, Condat--V\~u~\cite{con13,vu13} and PD3O~\cite{yan18} algorithms have been proposed; they can solve the general  Problem~\eqref{eq:original-pb}. These algorithms are primal--dual in nature; that is, they solve not only the 
primal problem \eqref{eq:original-pb}, but also  the dual problem, in a joint way. {Many other algorithms exist to solve Problem~\eqref{eq:original-pb}, and we refer to \cite{con19} and \cite{com21} for an overview of primal--dual proximal splitting algorithms. We can also mention the class of \emph{projective splitting} algorithms first proposed in \cite{eckps1} and further developed in several papers \cite{eckps2,eckps3,eckps4,eckps5,eckps6}. They proceed by  building  a separating hyperplane between the current iterate and the solution and then projecting the current iterate onto this hyperplane, to get closer to the solution. The projective splitting algorithms with forward steps \cite{eckps5,eckps6} are fully split and can solve Problem~\eqref{eq:original-pb}, as well.}\medskip

\noindent \textbf{Stochastic splitting algorithms}: In machine learning applications, the gradient of $F$ is often {much too expensive to evaluate} and replaced by a cheaper stochastic estimate. 
We can distinguish the two classes of 
standard stochastic gradients~\cite{gow19,
lan2020first,gor20} and 
variance-reduced (VR) stochastic gradients~\cite{joh13a,zha13,xia14,def14,gor20,gow20a}. 
VR stochastic gradients are estimators 
that ensure convergence to an exact solution of the problem, like with deterministic algorithms; that is, the variance of the stochastic errors they induce tends to zero. 
For some problems, VR stochastic algorithms are significantly faster than their deterministic counterparts. 
By contrast, with standard stochastic gradients {and constant stepsizes}, the algorithms typically do not converge to a fixed point and continue to fluctuate 
in a neighborhood of the solution set; this can be sufficient if the desired accuracy is low and speed is critical. 
When $L = I$, where $I$ denotes the identity, solving 
Problem~\eqref{eq:original-pb} with standard  and with VR stochastic gradients was considered in~\cite{yurtsever2016stochastic} and  in~\cite{pedregosa2019proximal}, respectively. In the general case $L \neq I$ of interest in this paper, solving the problem 
 with a standard stochastic gradient  was considered  in~\cite{zhao2018stochastic}. Thus, our proposed method is the first to allow solving the general Problem~\eqref{eq:original-pb} in a flexible way, with calls to $\nabla F$ or to standard or VR stochastic estimates.

\subsection{Mathematical Background}\label{secmb}

We introduce some notions and notations of convex analysis and operator theory, see \cite{boy04,bau17} 
 for more details. 
  Let $\mathcal{Z}$ be a real Hilbert space.  Let $G:\mathcal{Z}\rightarrow\mathbb{R}\cup\{+\infty\}$ be a convex function. 
 The domain of $G$ is the convex set  $\dom(G)=\{z\in\mathcal{Z}\;:\;G(z)\neq +\infty\}$, 
 its subdifferential 
 is the set-valued operator $\partial G:z\in\mathcal{Z}\mapsto \{y\in\mathcal{Z}\ :\ (\forall z'\in\mathcal{Z})\ G(z)+\langle z'-z, y\rangle\leq G(z')\}$, and its conjugate function is $G^*:z\mapsto \sup_{z'\in\mathcal{Z}} \{\langle z,z'\rangle -G(z')\}$. 
 If $G$ is differentiable at $z\in\mathcal{Z}$, $\partial G(z)=\{\nabla G(z)\}$. 
   We define the proximity operator of $G$ as the 
  operator
  $\mathrm{prox}_G:z\in\mathcal{Z}\mapsto \argmin_{z'\in\mathcal{Z}}\big\{G(z')+{\textstyle\frac{1}{2}}\|z-z'\|^2\big\}$. Finally, given any $b\in\mathcal{Z}$, we define the convex indicator function $\iota_b:z\mapsto \{0$ if $z=b$, $+\infty$ otherwise$\}$. 

Let $M : \cZ \rightarrow 2^\cZ$ be a set-valued operator. The inverse $M^{-1}$ of $M$ is defined by the relation $z' \in M(z) \Leftrightarrow z \in M^{-1}(z')$. The set of zeros of $M$ is $\mathrm{zer}(M) \eqdef 
\{z \in \cZ, 0 \in M(z)\}$.  $M$ is monotone if $\ps{w-w',z-z'} \geq 0$ and strongly monotone if there exists $\mu >0$ such that $\ps{w-w',z-z'} \geq \mu\|z-z'\|^2$, for every $(x,x')\in\mathcal{Z}^2$, $w \in M(z)$, $w' \in M(z')$. $M$ is maximally monotone if its graph is not contained in the graph of another monotone operator.
The resolvent of $M$ is $J_{M} \eqdef (I + M)^{-1}$. 
  If $G$ is proper, convex and lower semicontinuous, 
  $\partial G$ is maximally monotone, $J_{\partial G} = \prox_{G}$, $\mathrm{zer}(\partial G) = \argmin G$ and $(\partial G)^{-1} = \partial G^*$.

A single-valued operator $M$ on $\cZ$ is $\xi$-cocoercive if $\xi\|M(z) - M(z')\|^2 \leq \ps{M(z) - M(z'),z-z'}$, for every $(z,z')\in \mathcal{Z}^2$.  
 The resolvent of a maximally monotone operator is $1$-cocoercive and $\nabla G$ is $1/\nu$-cocoercive, for any $\nu$-smooth function $G$.

The adjoint of a linear operator  $P$ is denoted by $P^*$ and its operator norm by $\|P\|$. $P$ is self-adjoint if $P=P^*$.   
Let $P : \cZ \to \cZ$ be a self-adjoint linear operator. $P$ is positive  if $\ps{Pz,z} \geq 0$, for every $z \in \cZ$,  and strongly positive  if, additionally, $\ps{Pz,z} = 0$ implies $z=0$. In this latter case, the inner product induced by $P$ is defined by $\ps{z,z'}_P \eqdef \ps{Pz,z'}$ and the norm induced by $P$ by $\|z\|_P \eqdef \ps{z,z}_P^{1/2}$. We denote by $\mathcal{Z}_P$ the real Hilbert space made from the vector space $\mathcal{Z}$ endowed with $\ps{\cdot,\cdot}_P$.

\section{Primal--Dual Formulation and Optimality Conditions}\label{sec2}

For Problem~\eqref{eq:original-pb} to be well posed, we suppose that there exists $x^\star\in\mathcal{X}$, such that 
\begin{equation}
0\in \nabla F(x^\star) +\partial R(x^\star) +L^* \partial H(Lx^\star).\label{eqc0}
\end{equation}
 Then, $x^\star$ is a  solution to~\eqref{eq:original-pb}. For instance, a standard qualification constraint for this assumption to hold is that $0$ belongs to the relative interior of $\dom(H) - L\dom(R)$~\cite{com12}. Then, for every $x^\star$ satisfying \eqref{eqc0}, there exists $y^\star \in \partial H (L x^\star)$ such that $0\in \nabla F(x^\star) +\partial R(x^\star) +L^* y^\star$; equivalently, 
 $(x^\star,y^\star)$ is a zero of the set-valued operator $M$ defined by
\begin{equation}
    \label{eq:M}
    M : (x,y) \in\mathcal{X}\times\mathcal{Y} \mapsto  \Big(\nabla F(x)+\partial R(x)+L^* y, -Lx +\partial H^*(y)\Big).
\end{equation}
Conversely, for every $(x^\star,y^\star) \in \mathrm{zer}(M)$, $x^\star$ is a solution to \eqref{eq:original-pb} and $y^*$ is a solution to the dual problem 
\begin{equation}
\minimize_{y \in \mathcal{Y}} \,\Big((F+R)^*(-L^* y) + H^*(y)\Big),\label{eqdual}
\end{equation}
see Sect.~15.3 of~\cite{bau17}; 
moreover, there exist
$r^\star \in \partial R(x^\star)$ and $h^\star \in \partial H^*(y^\star)$ such that, using 2-block vector notations in $\mathcal{X}\times\mathcal{Y}$,
\begin{equation}
    \label{eq:saddle0}
    \begin{bmatrix} 0 \\ 0\end{bmatrix} = \begin{bmatrix}  \nabla F(x^\star)+ r^\star+L^* y^\star \\ -L x^\star+h^\star\end{bmatrix}.
\end{equation}
In the sequel, we let $(x^\star,y^\star) \in \mathrm{zer}(M)$ and $r^\star,h^\star$ be any elements such that Eqn.~\eqref{eq:saddle0} holds.

A zero of $M$ is also a saddle point of the convex--concave Lagrangian function, defined as
\begin{equation}
\label{eq:lagrangian}
    \cL(x,y) \coloneqq F(x)+R(x) - H^*(y) + \ps{Lx,y}.
\end{equation}
For every $x \in \mathcal{X}$ and $y \in \mathcal{Y}$, we define the \emph{Lagrangian gap} at $(x,y)$ as $\cL(x,y^\star) - \cL(x^\star,y)$. The following holds:
\begin{lemma}[Lagrangian gap]
    \label{lem:duality-gap}
    For every $x \in \mathcal{X}$ and $y \in \mathcal{Y}$, we have
    \begin{equation}
        \cL(x,y^\star) - \cL(x^\star,y) = D_F(x,x^\star)+D_R(x,x^\star)+D_{H^*}(y,y^\star),
    \end{equation}
    where the Bregman divergence of the smooth function $F$ between any two points $x$ and $x'$  is $D_F(x,x') \coloneqq F(x) - F(x') - \ps{\nabla F(x'),x-x'}$, and
          $D_R(x,x^\star) \eqdef R(x) - R(x^\star) - \ps{r^\star,x-x^\star}$, 
        $D_{H^{*}}(y,y^\star) \eqdef H^*(y) - H^*(y^\star) - \ps{h^\star,y-y^\star}$.
    \end{lemma}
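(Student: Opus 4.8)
The plan is to establish the identity by direct computation: expand both sides in terms of $F$, $R$, $H^*$, and the bilinear coupling, then reconcile the leftover linear terms using the saddle-point conditions~\eqref{eq:saddle0}. First I would expand the left-hand side from the definition~\eqref{eq:lagrangian} of $\cL$. Substituting $(x,y^\star)$ and $(x^\star,y)$ into $\cL$ and subtracting, the function-value terms collect and the coupling terms split off, giving
\begin{equation*}
    \cL(x,y^\star) - \cL(x^\star,y) = \big[F(x)-F(x^\star)\big] + \big[R(x)-R(x^\star)\big] + \big[H^*(y)-H^*(y^\star)\big] + \ps{Lx,y^\star} - \ps{Lx^\star,y}.
\end{equation*}

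Next I would expand the right-hand side by summing the three Bregman divergences from their definitions. Their function-value parts $F(x)-F(x^\star)$, $R(x)-R(x^\star)$, and $H^*(y)-H^*(y^\star)$ match exactly the first three brackets above, so the claim reduces to verifying that the remaining linear terms agree, namely
\begin{equation*}
    \ps{Lx,y^\star} - \ps{Lx^\star,y} = -\ps{\nabla F(x^\star),x-x^\star} - \ps{r^\star,x-x^\star} - \ps{h^\star,y-y^\star}.
\end{equation*}

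The crux is this last identity, which is where the optimality conditions enter. The first row of~\eqref{eq:saddle0} gives $\nabla F(x^\star)+r^\star = -L^* y^\star$, so grouping the first two inner products and using the adjoint relation yields $-\ps{\nabla F(x^\star)+r^\star,x-x^\star} = \ps{L^* y^\star,x-x^\star} = \ps{Lx,y^\star}-\ps{Lx^\star,y^\star}$. The second row gives $h^\star = Lx^\star$, so $-\ps{h^\star,y-y^\star} = -\ps{Lx^\star,y}+\ps{Lx^\star,y^\star}$. Adding these two expressions, the two copies of $\ps{Lx^\star,y^\star}$ cancel and exactly $\ps{Lx,y^\star}-\ps{Lx^\star,y}$ remains, which closes the argument. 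There is no genuine obstacle here: the computation is routine, and the only point requiring care is the bookkeeping of the cross term $\ps{Lx^\star,y^\star}$, which must appear with opposite signs from the two rows of~\eqref{eq:saddle0} in order to cancel. I would also note that only the membership $r^\star\in\partial R(x^\star)$ and $h^\star\in\partial H^*(y^\star)$ via~\eqref{eq:saddle0} is used, so no smoothness or strong convexity is needed for the identity itself.
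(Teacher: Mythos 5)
Your proof is correct and follows essentially the same route as the paper's: both expand the Bregman divergences, invoke the two rows of~\eqref{eq:saddle0} to replace $\nabla F(x^\star)+r^\star$ by $-L^*y^\star$ and $h^\star$ by $Lx^\star$, and watch the cross term $\ps{Lx^\star,y^\star}$ cancel. The only difference is organizational (you match the linear terms of the two sides, while the paper builds the right-hand side up into $\cL(x,y^\star)-\cL(x^\star,y)$), which is immaterial.
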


\begin{proof}
Using the optimality conditions~\eqref{eq:saddle0}, we have
\begin{align*}
    D_F(x,x^\star)+D_R(x,x^\star) &= (F+R)(x) - (F+R)(x^\star) -\ps{\nabla F(x^\star) + r^\star,x-x^\star}\notag\\
    &= (F+R)(x) - (F+R)(x^\star) +\ps{L^* y^\star,x-x^\star}\\
    &= (F+R)(x) - (F+R)(x^\star) +\ps{y^\star,L x} - \ps{y^\star, L x^\star}.\notag
\end{align*}
We also have
\begin{align*}
    D_{H^*}(y,y^\star) 
    &= H^*(y) - H^*(y^\star) -\ps{L x^\star,y-y^\star}\\
    &= H^*(y) - H^*(y^\star) -\ps{L x^\star,y} +\ps{y^\star,L x^\star}.\notag
\end{align*}
Hence, 
\begin{align*}
&D_F(x,x^\star)+D_R(x,x^\star)+D_{H^*}(y,y^\star)\notag\\
{}={}& (F+R)(x) - (F+R)(x^\star) + H^*(y) - H^*(y^\star) -\ps{L x^\star,y} +\ps{y^\star,L x}\\
{}={}& \cL(x,y^\star) - \cL(x^\star,y).
\end{align*}\end{proof}

For every $x \in \mathcal{X}$, $y \in \mathcal{Y}$, Lemma~\ref{lem:duality-gap} and the convexity of $F,R,H^*$ imply that 
$ 
  \cL(x^\star,y) \leq \cL(x^\star,y^\star) \leq \cL(x,y^\star)
$. 
 So, the Lagrangian gap $\cL(x,y^\star) - \cL(x^\star,y)$ is nonnegative, and it is zero 
if  $x$ is a solution to Problem~\eqref{eq:original-pb} and $y$ is a solution to
the dual problem \eqref{eqdual}. The converse is not always true, generally speaking. But in realistic situations, this is the case, and under mild assumptions, like strict convexity of the functions around $x^\star$ and $y^\star$, the Lagrangian gap converging to zero is a valid measure of convergence to a solution.

The operator $M$ defined in~\eqref{eq:M} can be shown to be maximally monotone. Moreover, we have
\begin{align}
    M(x,y) &= \begin{bmatrix} \partial R(x) \\ 0 \end{bmatrix} +  \begin{bmatrix} &   L^* y \\ -L x\!\!& {}+\partial H^*(y)\end{bmatrix}+\begin{bmatrix} \nabla F(x)\\ 0\end{bmatrix} \label{eq:saddle}\\  
    &= \begin{bmatrix} 0 \\ \partial H^*(y) \end{bmatrix} +  \begin{bmatrix} \partial R(x)\!\!&{}+L^* y \\ -L x\end{bmatrix}+\begin{bmatrix} \nabla F(x)\\ 0\end{bmatrix}, \label{eq:saddle12}
    \end{align}
and each term at the right hand side of~\eqref{eq:saddle} or~\eqref{eq:saddle12} is maximally monotone, see Corollary 25.5 in~\cite{bau17}.

\begin{figure*}[t]
\begin{minipage}{.48\textwidth}\begin{algorithm}[H]
 \caption*{\textbf{Davis--Yin Splitting alg.} \\DYS$(\tilde{A},\tilde{B},\tilde{C})$~\cite{dav17}}
 \begin{algorithmic}[1]
    \STATE \textbf{Input:} $v^0\in \mathcal{Z}$, $\gamma>0$
 	\FOR{$k = 0,1,2,\dots$}
  \STATE $z^{k} = J_{\gamma \tilde{B}}(v^k)$
  \STATE $u^{k+1} = J_{\gamma \tilde{A}}(2 z^{k} - v^k - \gamma \tilde{C}(z^{k}))$
  \STATE $v^{k+1} = v^k + u^{k+1} - z^{k}$
 	\ENDFOR
 \end{algorithmic}
 \end{algorithm}
 \end{minipage}\ \ \ \ \   \begin{minipage}{.48\textwidth}\begin{algorithm}[H]
\caption*{\textbf{LiCoSGD} (new)}
 \begin{algorithmic}[1]
    \STATE \textbf{Input:} $x^0 \in \cX, y^0 \in \cY $, $\gamma>0$, $\tau>0$
 	\FOR{$k = 0,1,2,\dots$}
 	\STATE $w^k = x^k -\gamma g^{k+1}$
	\STATE $y^{k+1}=y^k + \tau L (w^k-\gamma L^* y^k)-\tau b$%
	\STATE $x^{k+1} =w^k - \gamma L^* y^{k+1}$
 	\ENDFOR
 \end{algorithmic}\end{algorithm}\end{minipage}
\medskip

{ \normalsize Note : the deterministic versions of the algorithms are obtained by setting $g^{k+1}=\nabla F(x^k)$.}
 
\begin{minipage}{.48\textwidth}\begin{algorithm}[H]
\caption*{\textbf{Stochastic PDDY alg.} (new)}
 \begin{algorithmic}[1]
    \STATE \textbf{Input:} $p^0 \in \cX, y^0 \in \cY$, $\gamma>0$, $\tau>0$
 	\FOR{$k = 0,1,2,\dots$}
 	\STATE $y^{k+1} \!= \!\prox_{\tau  H^*\!}\!\big(y^{k}+\tau L (p^k-\gamma L^*y^{k})\big)$
 	\STATE $x^k=p^k -\gamma L^* y^{k+1}$
	\STATE $s^{k+1}=\prox_{\gamma R}\big(2x^k-p^k-\gamma g^{k+1}\big)$%
	\STATE $p^{k+1}=p^k+s^{k+1}-x^k$
 	\ENDFOR
 \end{algorithmic}
 \end{algorithm}\end{minipage}\ \ \ \ \ \ \begin{minipage}{.48\textwidth}\begin{algorithm}[H]
\caption*{\textbf{Stochastic PD3O alg.} (new)}
 \begin{algorithmic}[1]
    \STATE \textbf{Input:} $p^0 \in \cX, y^0 \in \cY $, $\gamma>0$, $\tau>0$
 	\FOR{$k = 0,1,2,\dots$}
 	\STATE $x^k=\prox_{\gamma R}(p^k)$
 	\STATE $w^k = 2x^k - p^k-\gamma g^{k+1}$
	\STATE $\!y^{k+1} \!= \!\prox_{\tau H^{\!*}\!}\!\big( y^k + \tau L (w^k-\gamma L^* y^k)\big)$
	\STATE $p^{k+1} = x^k - \gamma g^{k+1} - \gamma L^* y^{k+1}$
 	\ENDFOR
 \end{algorithmic}
\end{algorithm}\end{minipage}
 \end{figure*}

\section{Davis--Yin Splitting}\label{sec3}
Solving Problem~\eqref{eq:original-pb} boils down to finding a zero $(x^\star,y^\star)$ of the monotone operator $M$ defined in \eqref{eq:M}, which can be written as the sum of three monotone operators, like in \eqref{eq:saddle} or \eqref{eq:saddle12}. The method proposed by Davis and Yin~\cite{dav17}, which we call Davis--Yin splitting (DYS), 
 is dedicated to this problem; that is, find a zero of the sum of three monotone operators, one of which is cocoercive.

Let $\mathcal{Z}$ be a real Hilbert space. Let $\tilde{A}, \tilde{B}, \tilde{C}$  be maximally monotone operators on $\mathcal{Z}$. We assume that $\tilde{C}$ is $\xi$-cocoercive, for some $\xi >0$.
The DYS algorithm, denoted by $\text{DYS}(\tilde{A},\tilde{B},\tilde{C})$ and shown above, aims at finding an element in  $\mathrm{zer}(\tilde{A}+\tilde{B}+\tilde{C})$, supposed nonempty.
The fixed points of $\text{DYS}(\tilde{A},\tilde{B},\tilde{C})$ are the triplets $(v^\star, z^\star, u^\star)\in \mathcal{Z}^3$, such that
\begin{equation}
\label{eq:DYSfix}
z^\star = J_{\gamma \tilde{B}}(v^\star),\quad u^\star = J_{\gamma \tilde{A}}\big(2 z^\star - v^\star - \gamma \tilde{C}(z^\star)\big),\quad u^\star = z^\star.
\end{equation}
These fixed points are related to the zeros of $\tilde{A}+\tilde{B}+\tilde{C}$ as follows, see Lemma~2.2 in ~\cite{dav17}: for every $(v^\star, z^\star, u^\star)\in \mathcal{Z}^3$ satisfying \eqref{eq:DYSfix}, $z^\star \in \mathrm{zer}(\tilde{A}+\tilde{B}+\tilde{C})$. Conversely, for every $z^\star \in \mathrm{zer}(\tilde{A}+\tilde{B}+\tilde{C})$, there exists $(v^\star, u^\star) \in\mathcal{Z}^2$, such that  $(v^\star, z^\star, u^\star)$ satisfies  \eqref{eq:DYSfix}. We have~\cite{dav17}:

\begin{lemma}[Convergence of the DYS algorithm]
    \label{lem:DYS-cv}
    Suppose that $\gamma\in (0,2\xi)$. Then the sequences  $(v^k)_{k\in\mathbb{N}}$, $(z^k)_{k\in\mathbb{N}}$, $(u^k)_{k\in\mathbb{N}}$ generated by $\text{DYS}(\tilde{A},\tilde{B},\tilde{C})$ 
converge 
to some elements $v^\star$, $z^\star$, $u^\star$ in $\mathcal{Z}$, respectively. Moreover, $(v^\star, z^\star, u^\star)$ satisfies  \eqref{eq:DYSfix} and $u^\star = z^\star \in \mathrm{zer}(\tilde{A}+\tilde{B}+\tilde{C})$.
\end{lemma}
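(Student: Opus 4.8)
The plan is to recognize the DYS recursion as a fixed-point iteration of a single averaged operator, and then invoke the standard convergence theory for such iterations. Concretely, define the \emph{DYS operator} $T:\mathcal{Z}\to\mathcal{Z}$ by
\[
T = I - J_{\gamma\tilde B} + J_{\gamma\tilde A}\circ\big(2J_{\gamma\tilde B} - I - \gamma\,\tilde C\circ J_{\gamma\tilde B}\big).
\]
Unwinding the three lines of the algorithm shows that $z^k = J_{\gamma\tilde B}(v^k)$, that $u^{k+1} = J_{\gamma\tilde A}(2z^k - v^k - \gamma\tilde C(z^k))$, and hence that $v^{k+1} = v^k + u^{k+1} - z^k = Tv^k$; so the whole scheme is the plain iteration $v^{k+1} = Tv^k$. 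The fixed-point characterization recalled just before the lemma (Lemma 2.2 of~\cite{dav17}) identifies $\mathrm{Fix}(T)$ with the $v^\star$-components of the triplets satisfying~\eqref{eq:DYSfix}, and since $\mathrm{zer}(\tilde A+\tilde B+\tilde C)\neq\emptyset$ by assumption, the converse part of that characterization gives $\mathrm{Fix}(T)\neq\emptyset$.

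The technical heart, and the step I expect to be the main obstacle, is to prove that $T$ is $\alpha$-averaged for some $\alpha\in(0,1)$ precisely when $\gamma\in(0,2\xi)$. This is where every hypothesis is used: the resolvents $J_{\gamma\tilde A}$ and $J_{\gamma\tilde B}$ are firmly nonexpansive because $\tilde A,\tilde B$ are maximal monotone, while the $\xi$-cocoercivity of $\tilde C$ makes the forward step $I-\gamma\tilde C$ averaged as soon as $\gamma<2\xi$. The difficulty is that $T$ is not a plain composition of these pieces but routes a reflection $2J_{\gamma\tilde B}-I$ into a forward--backward step; one must chain the firm-nonexpansiveness inequality for $J_{\gamma\tilde A}$ with the cocoercivity inequality for $\tilde C$ evaluated at $z^k = J_{\gamma\tilde B}(v^k)$, carefully tracking the cross terms so that the surplus produced by the forward step is absorbed. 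Rather than reproduce this inequality calculation, I would invoke the corresponding result of~\cite{dav17}, which shows that $T$ is averaged with a constant of the form $\alpha = \frac{2\xi}{4\xi-\gamma}\in(\tfrac12,1)$ under $\gamma\in(0,2\xi)$.

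With $T$ averaged and $\mathrm{Fix}(T)\neq\emptyset$, the Krasnosel'ski\u{\i}--Mann theorem for averaged operators yields that $(v^k)$ converges to some $v^\star\in\mathrm{Fix}(T)$ (weakly in a general Hilbert space, hence strongly in the finite-dimensional setting used throughout the paper). Continuity then transfers this to the remaining sequences: since $J_{\gamma\tilde B}$ is nonexpansive, setting $z^\star\eqdef J_{\gamma\tilde B}(v^\star)$ gives $z^k=J_{\gamma\tilde B}(v^k)\to z^\star$; and setting $u^\star\eqdef J_{\gamma\tilde A}\big(2z^\star-v^\star-\gamma\tilde C(z^\star)\big)$, the continuity of $\tilde C\circ J_{\gamma\tilde B}$ together with the nonexpansiveness of $J_{\gamma\tilde A}$ gives $u^{k+1}\to u^\star$. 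Passing to the limit in the updates—equivalently, reading off $v^\star=Tv^\star$—shows that $(v^\star,z^\star,u^\star)$ satisfies~\eqref{eq:DYSfix}, whence $u^\star=z^\star$; and the forward direction of the fixed-point characterization then gives $z^\star\in\mathrm{zer}(\tilde A+\tilde B+\tilde C)$, which completes the argument.
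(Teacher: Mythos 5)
Your argument is correct, and it is essentially the paper's own approach: the paper gives no proof of this lemma, simply citing \cite{dav17}, and your sketch faithfully reconstructs the standard argument from that reference --- the scheme is the Picard iteration $v^{k+1}=Tv^k$ of the averaged DYS operator $T$ (with the correct constant $\frac{2\xi}{4\xi-\gamma}\in(\tfrac12,1)$ for $\gamma\in(0,2\xi)$), $\mathrm{Fix}(T)\neq\emptyset$ follows from the fixed-point characterization together with $\mathrm{zer}(\tilde A+\tilde B+\tilde C)\neq\emptyset$, Krasnosel'ski\u{\i}--Mann gives convergence of $(v^k)$ in the finite-dimensional setting, and continuity of the resolvents and of the cocoercive (hence Lipschitz) operator $\tilde C$ transfers the convergence to $(z^k)$ and $(u^k)$ and identifies the limit as a solution of \eqref{eq:DYSfix}. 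Since the one genuinely technical step --- the averagedness of $T$ --- is itself deferred to \cite{dav17}, your write-up is no more self-contained than the paper's citation, but it is a correct account of what that citation contains.
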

The following equality is at the heart of the convergence proofs: 
\begin{lemma}[Fundamental equality of the DYS algorithm]
\label{lem:funda-DYS}
Let $(v^k,z^k,$ $u^k) \in \cZ^3$ be the iterates of the DYS algorithm, and $(v^\star, z^\star, u^\star)\in \cZ^3$ be such that~\eqref{eq:DYSfix} holds.
Then, for every $k \geq 0$, there exist $b^k \in \tilde{B}(z^k)$, $b^\star \in \tilde{B}(z^\star)$, $a^{k+1} \in \tilde{A}(u^{k+1})$ and $a^\star \in \tilde{A}(u^\star)$, such that
\begin{align}
    \label{eq:funda}
    \|v^{k+1} -v^\star\|^2 &= \|v^k - v^\star\|^2 -2\gamma\ps{b^k - b^\star,z^{k} - z^\star}-2\gamma\ps{a^{k+1} - a^\star,u^{k+1} - u^\star}\notag\\
   & \quad-2\gamma\ps{\tilde{C}(z^{k}) - \tilde{C}(z^\star),z^{k} - z^\star}+\gamma^2\|\tilde{C}(z^{k}) - \tilde{C}(z^\star)\|^2\\
    &\quad -\gamma^2\|a^{k+1}+b^k - a^{\star}-b^\star\|^2\notag.
\end{align}
\end{lemma}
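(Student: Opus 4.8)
The plan is to track, at every iteration, the exact operator elements produced by the two resolvent steps, and then to expand $\|v^{k+1}-v^\star\|^2$ purely algebraically, using the update rule $v^{k+1}=v^k+u^{k+1}-z^k$ together with the fixed-point relation $u^\star=z^\star$ from~\eqref{eq:DYSfix}.

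First I would read off the monotone-operator elements from the resolvents. Since $J_{\gamma\tilde B}=(I+\gamma\tilde B)^{-1}$, the identity $z^k=J_{\gamma\tilde B}(v^k)$ is equivalent to $b^k\eqdef\gamma^{-1}(v^k-z^k)\in\tilde B(z^k)$, and the fixed-point relation $z^\star=J_{\gamma\tilde B}(v^\star)$ gives $b^\star\eqdef\gamma^{-1}(v^\star-z^\star)\in\tilde B(z^\star)$. In the same way, $u^{k+1}=J_{\gamma\tilde A}\big(2z^k-v^k-\gamma\tilde C(z^k)\big)$ yields $a^{k+1}\eqdef\gamma^{-1}\big(2z^k-v^k-\gamma\tilde C(z^k)-u^{k+1}\big)\in\tilde A(u^{k+1})$, and the corresponding fixed-point relation gives $a^\star\eqdef\gamma^{-1}\big(2z^\star-v^\star-\gamma\tilde C(z^\star)-u^\star\big)\in\tilde A(u^\star)$. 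These four elements are exactly the ones named in the statement.

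Next I would convert these definitions into two difference relations, the only facts about the iterates that are needed. Subtracting the starred from the unstarred $\tilde B$-identity gives $v^k-v^\star=(z^k-z^\star)+\gamma(b^k-b^\star)$. Substituting this into the $\tilde A$-identity and subtracting again gives $u^{k+1}-u^\star=(z^k-z^\star)-\gamma\big((a^{k+1}-a^\star)+(b^k-b^\star)+(\tilde C(z^k)-\tilde C(z^\star))\big)$, so that in particular $(u^{k+1}-u^\star)-(z^k-z^\star)=-\gamma\big((a^{k+1}-a^\star)+(b^k-b^\star)+(\tilde C(z^k)-\tilde C(z^\star))\big)$.

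Finally, using $v^{k+1}-v^\star=(v^k-v^\star)+(u^{k+1}-u^\star)-(z^k-z^\star)$, I would expand
\[
\|v^{k+1}-v^\star\|^2=\|v^k-v^\star\|^2+2\ps{v^k-v^\star,\,(u^{k+1}-u^\star)-(z^k-z^\star)}+\|(u^{k+1}-u^\star)-(z^k-z^\star)\|^2,
\]
and substitute the two difference relations, expanding all inner products in terms of $a^{k+1}-a^\star$, $b^k-b^\star$, $z^k-z^\star$ and $\tilde C(z^k)-\tilde C(z^\star)$. The computation is routine; the main obstacle is purely organizational bookkeeping. One checks that the $\ps{b^k-b^\star,\,a^{k+1}-a^\star}$ and $\ps{b^k-b^\star,\,\tilde C(z^k)-\tilde C(z^\star)}$ cross terms cancel, that $\ps{b^k-b^\star,\,z^k-z^\star}$, $\ps{\tilde C(z^k)-\tilde C(z^\star),\,z^k-z^\star}$ and $\gamma^2\|\tilde C(z^k)-\tilde C(z^\star)\|^2$ appear with the right coefficients, and that the surviving quadratic terms in $a^{k+1}-a^\star$ and $b^k-b^\star$ regroup, via the difference relation for $u^{k+1}-u^\star$, into the single combined square $-\gamma^2\|a^{k+1}+b^k-(a^\star+b^\star)\|^2$ together with the inner product $-2\gamma\ps{a^{k+1}-a^\star,\,u^{k+1}-u^\star}$ displayed in~\eqref{eq:funda}. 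No monotonicity or cocoercivity is used in deriving this identity: it is a pure equality, and these properties enter only afterwards, when the individual inner products are sign-bounded to obtain convergence.
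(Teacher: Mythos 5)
Your proposal is correct and follows essentially the same route as the paper: both extract $b^k,b^\star,a^{k+1},a^\star$ from the resolvent identities, note that $v^{k+1}-v^\star=(v^k-v^\star)-\gamma\big(a^{k+1}+b^k+\tilde C(z^k)-(a^\star+b^\star+\tilde C(z^\star))\big)$, and expand the square using the same two difference relations. The bookkeeping you describe (cancellation of the $b$--$a$ and $b$--$\tilde C$ cross terms, regrouping into the combined square and the $\ps{a^{k+1}-a^\star,u^{k+1}-u^\star}$ term) checks out and matches the paper's computation.
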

\begin{proof}
Since $z^k = J_{\gamma \tilde{B}}(v^k)$, $z^k \in v^k - \gamma \tilde{B}(z^k)$ by definition of the resolvent. Therefore, there exists $b^k \in \tilde{B}(z^k)$, such that $z^k = v^k - \gamma b^k$. Similarly, 
$u^{k+1} \in 2z^k - v^k - \gamma \tilde{C}(z^{k}) - \gamma \tilde{A}(u^{k+1}) = v^k - 2\gamma b^k - \gamma \tilde{C}(z^{k}) - \gamma \tilde{A}(u^{k+1})$. 
 Therefore, there exists $a^{k+1} \in \tilde{A}(u^{k+1})$, such that
\begin{equation}
    \label{eq:DYS2}
     \left\{\begin{array}{l}
        z^{k} = v^k - \gamma b^k \\
        u^{k+1} = v^k - 2 \gamma b^k - \gamma \tilde{C}(z^{k}) - \gamma a^{k+1} \\
        v^{k+1} = v^k + u^{k+1} - z^{k}.
        \end{array}\right.
\end{equation}
Moreover,
$
    v^{k+1} = v^{k} - \gamma b^k - \gamma \tilde{C}(z^{k}) - \gamma a^{k+1}
$. 
Similarly, there exist $a^\star \in \tilde{A}(u^\star)$ and $b^\star \in \tilde{B}(z^\star)$, such that
\begin{equation}
    \label{eq:DYS2-fix}
    \left\{\begin{array}{l}
        z^\star = v^\star - \gamma b^\star \\
        u^\star = v^\star - 2 \gamma b^\star - \gamma \tilde{C}(z^\star) - \gamma a^\star \\
        v^\star = v^\star + u^\star - z^\star,
        \end{array}\right.
\end{equation}
and
   $ v^\star = v^\star - \gamma b^\star - \gamma \tilde{C}(z^\star) - \gamma a^\star
$. 
Therefore, 
\begin{align*}
    \|v^{k+1} - v^\star\|^2 ={}& \|v^k - v^\star\|^2 -2\gamma\Big\langle a^{k+1}+b^k + \tilde{C}(z^{k}) - \big(a^{\star}+b^\star + \tilde{C}(z^{\star})\big),\\
    &v^{k} - v^\star\Big\rangle+\gamma^2\big\|a^{k+1}+b^k + \tilde{C}(z^{k}) - \big(a^{\star}+b^\star + \tilde{C}(z^{\star})\big)\big\|^2.\notag
\end{align*}
By expanding the last squared norm  and by using~\eqref{eq:DYS2} and~\eqref{eq:DYS2-fix} in the inner product, we get
\begin{align*}
    \|v^{k+1} - &v^\star\|^2 = \|v^k - v^\star\|^2-2\gamma\ps{b^k + \tilde{C}(z^{k})- \big(b^\star + \tilde{C}(z^\star)\big),z^{k} - z^\star}\notag\\
    &-2\gamma\ps{a^{k+1}- a^\star,u^{k+1} - u^\star}\notag\\
    &-2\gamma\ps{b^k + \tilde{C}(z^{k})- \big(b^\star + \tilde{C}(z^\star)\big),\gamma b^k - \gamma b^\star}\notag\\
    &-2\gamma\ps{a^{k+1}- a^\star,2 \gamma b^k + \gamma \tilde{C}(z^{k}) + \gamma a^{k+1} - \big(2 \gamma b^\star + \gamma \tilde{C}(z^{\star}) + \gamma a^\star\big)}\\
    &+\gamma^2\|a^{k+1}+b^k- \big(a^\star+b^\star\big)\|^2+\gamma^2\|\tilde{C}(z^{k})- \tilde{C}(z^\star)\|^2\notag\\
    &+2\gamma^2\ps{a^{k+1}+b^k - \big(a^\star+b^\star\big),\tilde{C}(z^{k})- \tilde{C}(z^\star)}\\
    &\hspace{-7mm}= \|v^k - v^\star\|^2-2\gamma\ps{b^k -b^\star ,z^{k} - z^\star}-2\gamma\ps{a^{k+1}- a^\star,u^{k+1} - u^\star}\notag\\
    &-2\gamma\ps{ \tilde{C}(z^{k})-  \tilde{C}(z^\star),z^{k} - z^\star}+\gamma^2\|\tilde{C}(z^{k})- \tilde{C}(z^\star)\|^2\\
   &-2\gamma^2 \|b^k -b^\star\|^2
    -2\gamma^2\ps{a^{k+1}- a^\star,2  b^k  + a^{k+1} - 2  b^\star  - a^\star}\\
    &+\gamma^2\|a^{k+1}+b^k- \big(a^\star+b^\star\big)\|^2\notag
\end{align*}
{After combining the last three terms into $-\gamma^2\|a^{k+1}+b^k- \big(a^\star+b^\star\big)\|^2$, we obtain the result.}
\end{proof}

\section{A Class of Four Primal--Dual Optimization Algorithms}\label{sec:pdalgos}

We now set $\mathcal{Z}\coloneqq\mathcal{X}\times\mathcal{Y}$, where $\mathcal{X}$ and $\mathcal{Y}$ are the spaces defined in Sect.~\ref{sec2}. To solve the primal--dual problem \eqref{eq:saddle} or \eqref{eq:saddle12}, which consists in finding a zero of the sum $A+B+C$ of three operators in $\mathcal{Z}$, of which $C$ is cocoercive, a natural idea is to apply the Davis--Yin algorithm  DYS$(A,B,C)$. But the resolvent of $A$ or $B$ is often intractable. In this section, we show that preconditioning is the solution; that is,  we exhibit a strongly positive linear operator $P$, such that DYS$(P^{-1}A,P^{-1}B,P^{-1}C)$  is tractable. Since
$P^{-1}A,P^{-1}B,P^{-1}C$ are monotone operators in $\mathcal{Z}_P$,
 the algorithm will converge to a  zero of $P^{-1}A + P^{-1}B + P^{-1}C$, or, equivalently, of $A+B+C$.
Let us apply this idea in four different ways.

\subsection{A New Primal--Dual Algorithm: The PDDY Algorithm}

Let $\gamma >0$ and $\tau>0$ be real parameters. We introduce the four operators on $\mathcal{Z}$, using matrix-vector notations:
  \begin{align}
    A(x,y)=\begin{bmatrix} &   L^* y \\ -L x\!\!&{}+\partial H^*(y)\end{bmatrix},\ &B(x,y)=\begin{bmatrix} \partial R(x) \\ 0 \end{bmatrix}\!,\ C(x,y)=\begin{bmatrix} \nabla F(x)\\ 0\end{bmatrix},\notag\\
    &P=\begin{bmatrix} I&  0 \\ 0 & \ \ \frac{\gamma}{\tau}I - \gamma^2 L L^* \end{bmatrix}\label{eq:ABCP}.
  \end{align}
$P$ is strongly positive  if and only if $\gamma\tau \|L\|^2 < 1$. Since $A$, $B$, $C$ are maximally monotone in $\mathcal{Z}$, $P^{-1}A,P^{-1}B,P^{-1}C$ are maximally monotone in $\mathcal{Z}_P$. Moreover, $P^{-1}C$ is $1/\nu$-cocoercive in $\mathcal{Z}_P$. Importantly, we have:
\begin{align}
    P^{-1}C:(x,y) \mapsto \big(\nabla F(x),0\big),\ \ &\ \  J_{\gamma P^{-1}B}:(x,y)\mapsto \big(\mathrm{prox}_{\gamma R}(x),y\big)\label{algorithm_reslv},\\
    J_{\gamma P^{-1}A}:(x,y)\mapsto (x',y'),\ &\mbox{ where } 
    \left\lfloor
       \begin{array}{l}
       y'
   = \mathrm{prox}_{\tau H^*}\big(y+\tau L (x - \gamma L^* y)\big)\\
   x'=x -\gamma L^* y'.
   \end{array}\right.\notag
\end{align}
The form of the last resolvent was shown in \cite{oco20}; see also \cite{con19}, where this resolvent appears as one iteration of the Proximal Method of Multipliers. 
We plug these explicit steps into the Davis--Yin algorithm DYS$(P^{-1}B,P^{-1}A,$ $P^{-1}C)$ and we identify the variables as  $v^k = (p^k,q^k)$, $z^k = (x^k,y^{k+1})$, $u^k = (s^k,d^k)$, for some variables $(p^k, x^k, s^k) \in \cX^3$ and $(q^k, y^k, d^k) \in \cY^3$. Thus, we do the following substitutions:

$\bullet\ \ $Using~\eqref{algorithm_reslv}, the step
$
    z^k = J_{\gamma P^{-1}A}(v^k),
$
is equivalent to
\begin{equation*}
\left\lfloor\begin{array}{l}
    y^{k+1} = \prox_{\tau  H^*}\big((I-\tau\gamma LL^*)q^{k} +\tau L p^k \big)\\
    x^k = p^k -\gamma L^* y^{k+1}\notag
    \end{array}\right.
\end{equation*}

$\bullet\ \ $The step
$
    u^{k+1} = J_{\gamma P^{-1}B}\big(2z^k - v^k - \gamma P^{-1}C(z^k)\big)
$
is equivalent to
\begin{equation*}\left\lfloor\begin{array}{l}
    s^{k+1} =\prox_{\gamma R}\big(2x^k-p^k-\gamma \nabla F(x^k)\big)\\
    d^{k+1} = 2y^{k+1} - q^k.
\end{array}\right.\end{equation*}

$\bullet\ \ $Finally, the step
$
    v^{k+1} = v^k + u^{k+1} - z^k
$
is equivalent to
\begin{equation*}\left\lfloor\begin{array}{l}
    p^{k+1} = p^k + s^{k+1} - x^k\\
    q^{k+1} = q^k + d^{k+1} - y^{k+1}.
\end{array}\right.\end{equation*}
We can replace $q^k$ by $y^k$ and discard $d^k$, which is not needed. 
This yields the new Primal--Dual Davis--Yin (PDDY) algorithm, shown above (with $g^{k+1}=\nabla F(x^k)$). 
Note that it can be written with only one call to $L$ and $L^*$ per iteration.
Also, the PDDY Algorithm could be overrelaxed~\cite{con19}, since this possibility exists for the Davis--Yin algorithm. We have:
\begin{theorem}[Convergence of the PDDY Algorithm]
    \label{th:pddy-cv}
    Suppose that $\gamma\in (0,2/\nu)$ and that $\tau\gamma\|L\|^2<1$. Then the sequences $(x^k)_{k\in\mathbb{N}}$ and $(s^k)_{k\in\mathbb{N}}$  generated by the PDDY Algorithm converge to the same  
    solution $x^\star$ to Problem~\eqref{eq:original-pb}, and the sequence $(y^k)_{k\in\mathbb{N}}$ converges to some dual solution $y^\star$ of \eqref{eqdual}.
\end{theorem}
\begin{proof}
    Under the assumptions of Theorem~\ref{th:pddy-cv}, $P$ is strongly positive. Then the result follows from Lemma~\ref{lem:DYS-cv} applied in $\mathcal{Z}_P$ and from the analysis in Sect.~\ref{sec2}.
\end{proof}

\subsection{The PD3O Algorithm}
We consider the same notations as in the previous section. We switch the roles of $A$ and $B$ and consider DYS$(P^{-1}A,P^{-1}B,P^{-1}C)$. Then, after some substitutions similar to the ones done to construct the PDDY algorithm, 
 we recover exactly the PD3O algorithm proposed in~\cite{yan18}. 
Although it is not derived this way, its interpretation as a primal--dual Davis--Yin algorithm is mentioned by its author. Its convergence properties are the same as for the PDDY Algorithm, as stated in Theorem~\ref{th:pddy-cv}. 

In a recent work~\cite{oco20}, the PD3O algorithm has been shown to be an instance of the Davis--Yin algorithm, with a different reformulation, which does not involve duality. The authors of the present paper developed this technique further, applied it to the PDDY algorithm as well, and obtained convergence rates and accelerations for both algorithms~\cite{con20}.

\subsection{The Condat--V\~u Algorithm}\label{seccv}

    Let $\gamma >0$ and $\tau>0$ be real parameters. We want to study the decomposition \eqref{eq:saddle12} instead of \eqref{eq:saddle}. For this, we define the operators
       \begin{equation}
        \label{eq:ABC2}
        \bar{A}(x,y)=\begin{bmatrix}\partial R(x)&{}  + L^* y \\ -L x&\end{bmatrix}\!,\ \bar{B}(x,y)=\begin{bmatrix}  0\\ \partial H^*(y)\end{bmatrix}\!,\      
        Q=\begin{bmatrix} K\ &  \ 0 \\ 0\  & \ I \end{bmatrix},
      \end{equation}
where $K \eqdef \frac{\gamma}{\tau}I - \gamma^2 L^* L$, and we define $C$ like in \eqref{eq:ABCP}.  If $\gamma\tau \|L\|^2 < 1$, $K$ and $Q$ are strongly positive. In that case, since $\bar{A}$, $\bar{B}$, $C$ are maximally monotone in $\mathcal{Z}=\mathcal{X}\times\mathcal{Y}$, $Q^{-1}\bar{A}$, $Q^{-1}\bar{B}$, $Q^{-1}C$ are maximally monotone in $\mathcal{Z}_Q$.
Moreover, we have:
\begin{align}
    Q^{-1}C:(x,y)\mapsto \big(K^{-1}\nabla F(x)&,0\big)\label{eq:QC},\ \ \  J_{\gamma Q^{-1}\bar{B}}:(x,y)\mapsto \big(x,\mathrm{prox}_{\gamma H^*}(y)\big),\\
     J_{\gamma Q^{-1}\bar{A}}:(x,y)\mapsto (x',y'),\ &\mbox{ where } 
 \left\lfloor
    \begin{array}{l}
    x'
= \mathrm{prox}_{\tau R}\big((I-\tau\gamma L^* L) x-\tau L^* y \big)\\
y'=y +\gamma L x'.
\end{array}\right.\notag
\end{align}

If we plug these explicit steps into the Davis--Yin algorithm DYS$(Q^{-1}\bar{A},$ $Q^{-1}\bar{B},Q^{-1}C)$ or DYS$(Q^{-1}\bar{B},Q^{-1}\bar{A},Q^{-1}C)$, and after straightforward simplifications, we recover the two forms of the Condat--V\~u algorithm~\cite{con13,vu13}; that is, Algorithms 3.1 and  3.2 of~\cite{con13}, respectively, see also in \cite{con19}. 
The Condat--V\~u algorithm has the form of a  primal--dual forward--backward algorithm~\cite{com14,kom15,con19}. We have just shown that it can be viewed as a primal--dual Davis--Yin algorithm, with a different metric, as well. Furthermore, it is easy to show that 
$Q^{-1}C$ is $\xi$-cocoercive, with $\xi = (\frac{\gamma}{\tau} - \gamma^2\|L\|^2)/\nu$.
Hence, convergence follows from Lemma~\ref{lem:DYS-cv}, under the same condition on $\tau$ and $\gamma$ as in Theorem 3.1 of~\cite{con13}, namely $\frac{\nu}{2} < \frac{1}{\tau} - \gamma\|L\|^2$.

\section{Stochastic Primal--Dual Algorithms}\label{sec:grad-estimators}

We now introduce stochastic versions of the PD3O and PDDY algorithms; we omit the analysis of stochastic versions of the Condat--V\~u algorithm, which is the same,  with added technicalities due to cocoercivity with respect to the metric induced by $Q$ in \eqref{eq:ABC2}. 
Our approach has a `plug-and-play' flavor: we show that we have all the ingredients to leverage the unified theory of stochastic gradient estimators recently presented in \cite{gor20}. 

In the stochastic versions of the algorithms, the gradient $\nabla F(x^k)$ is replaced by a stochastic gradient $g^{k+1}$. That is, we consider a filtered probability space $(\Omega,\cF,(\cF_k)_{k\in\mathbb{N}},\bP)$, an $(\cF_k)$-adapted stochastic process $(g^k)_{k\in\mathbb{N}}$, we denote by $\bE$ the expectation and by $\bE_k$ the conditional expectation w.r.t. $\cF_k$. The following assumption is made on the process $(g^k)_{k\in\mathbb{N}}$.
\begin{assumption}
    \label{as:sto-grad}
    There exist $\alpha,\beta,\delta \geq 0$, $\rho \in (0,1]$ and a $(\cF_k)_{k\in\mathbb{N}}$-adapted stochastic process denoted by $(\sigma_k)_{k\in\mathbb{N}}$, such that, for every $k \in \mathbb{N}$, $\bE_k(g^{k+1}) = \nabla F(x^k)$, $\ \bE_k(\|g^{k+1} - \nabla F(x^\star)\|^2) \leq 2\alpha D_F(x^k,x^\star) + \beta\sigma_k^2\ $, and $\ \bE_k(\sigma_{k+1}^2) \leq (1-\rho)\sigma_k^2 + 2\delta D_F(x^k,x^\star)$.
\end{assumption}
Assumption~\ref{as:sto-grad} is satisfied by several stochastic gradient estimators used in machine learning, including some types of coordinate descent~\cite{wri15}, variance reduction~\cite{gow20a}, 
and also compressed gradients used to reduce the communication cost in distributed optimization~\cite{bas20,sat20,xu21}, see Table 1 in~\cite{gor20}. Also, the full gradient estimator defined by
    $g^{k+1} = \nabla F(x^k)$
satisfies Assumption~\ref{as:sto-grad} with $\alpha = \nu$, the smoothness constant of $F$, $\sigma_k \equiv 0$, $\rho = 1$, and $\delta = \beta = 0$, see Theorem~2.1.5 in~\cite{nesterov2018lectures}. The loopless SVRG estimator~\cite{hofmann2015variance,kovalev2019don} also satisfies Assumption~\ref{as:sto-grad}:
 \begin{proposition}[Loopless SVRG estimator]\label{proplsvrg}
    Assume that $F$ is written as a sum $F = \frac{1}{n}\sum_{i = 1}^n f_i,$ for some $n\geq 1$, where for every $i \in \{1,\ldots,n\}$, $f_i : \cX \to \bR$ is a $\nu_i$-smooth convex function. Let $p \in (0,1)$, and $(\Omega,\cF,\bP)$ be a probability space. On $(\Omega,\cF,\bP)$, consider:
    
    \noindent$\bullet\ \ $a sequence of i.i.d. random variables $(\theta^k)_{k\in\mathbb{N}}$ with Bernoulli distribution of parameter $p$,
    
     \noindent$\bullet\ \ $a sequence of i.i.d random variables $(\zeta^k)_{k\in\mathbb{N}}$ with uniform distribution over $\{1,\ldots,n\}$,

    \noindent$\bullet\ \ $the sigma-field $\cF_k$ generated by $(\theta^k,\zeta^k)_{0 \leq j \leq k}$ and a $(\cF_k)$-adapted stochastic process $(x^k)_{k\in\mathbb{N}}$,

  \noindent$\bullet\ \ $a stochastic process $(\tilde{x}^k)_{k\in\mathbb{N}}$ defined by
        $\tilde{x}^{k+1} = \theta^{k+1} x^k + (1-\theta^{k+1})\tilde{x}^{k}$,
   
  \noindent$\bullet\ \ $a stochastic process $(g^{k})_{k\in\mathbb{N}}$ defined by
        $g^{k+1} = \nabla f_{\zeta^{k+1}}(x^k) - \nabla f_{\zeta^{k+1}}(\tilde{x}^{k}) + \nabla F(\tilde{x}^{k})$.
   
Then, the process $(g^k)_{k\in\mathbb{N}}$ satisfies Assumption~\ref{as:sto-grad} with $\alpha = 2\max_{i \in \{1,\ldots,n\}} \nu_i$, $\beta = 2$, $\rho = p$, $\delta = \alpha p /2$, and $\sigma_k^2 = \frac{1}{n}\sum_{i=1}^n \bE_k \|\nabla f_i(\tilde{x}^{k}) - \nabla f_i(x^\star)\|^2$.\end{proposition}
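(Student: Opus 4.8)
The plan is to verify, one by one, the three requirements in Assumption~\ref{as:sto-grad} for the announced parameters: unbiasedness $\bE_k(g^{k+1}) = \nabla F(x^k)$, the second-moment bound with $\alpha = 2\max_i\nu_i$ and $\beta = 2$, and the contraction of $\sigma_k^2$ with $\rho = p$ and $\delta = \alpha p/2$. Two preliminary observations organize everything. First, by a straightforward induction $\tilde{x}^k$ is $\cF_k$-measurable (it is built from $\theta^j, x^{j-1}, \tilde{x}^{j-1}$ for $j \le k$), whereas $\theta^{k+1}$ and $\zeta^{k+1}$ are independent of $\cF_k$; in particular the conditional expectation in the definition of $\sigma_k^2$ is vacuous, so $\sigma_k^2 = \frac1n\sum_i \|\nabla f_i(\tilde{x}^k) - \nabla f_i(x^\star)\|^2$. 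Second, the only analytic input I will need is the standard co-coercivity inequality for a convex $\nu_i$-smooth function, $\|\nabla f_i(x) - \nabla f_i(x^\star)\|^2 \le 2\nu_i D_{f_i}(x,x^\star)$, combined with the identity $\frac1n\sum_i D_{f_i}(x,x^\star) = D_F(x,x^\star)$, which holds because Bregman divergences are additive and $F = \frac1n\sum_i f_i$, $\nabla F(x^\star) = \frac1n\sum_i\nabla f_i(x^\star)$.

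Unbiasedness is immediate: since $\zeta^{k+1}$ is uniform and independent of $\cF_k$ while $x^k, \tilde{x}^k$ are $\cF_k$-measurable, $\bE_k[\nabla f_{\zeta^{k+1}}(x^k)] = \nabla F(x^k)$ and $\bE_k[\nabla f_{\zeta^{k+1}}(\tilde{x}^k)] = \nabla F(\tilde{x}^k)$, so the control-variate term $\nabla F(\tilde{x}^k)$ cancels and $\bE_k(g^{k+1}) = \nabla F(x^k)$. For the second moment I would write $g^{k+1} - \nabla F(x^\star) = u - w$ with $u := \nabla f_{\zeta^{k+1}}(x^k) - \nabla f_{\zeta^{k+1}}(x^\star)$ and $w := \big(\nabla f_{\zeta^{k+1}}(\tilde{x}^k) - \nabla f_{\zeta^{k+1}}(x^\star)\big) - \big(\nabla F(\tilde{x}^k) - \nabla F(x^\star)\big)$, so that $\bE_k(w) = 0$. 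Applying $\|u - w\|^2 \le 2\|u\|^2 + 2\|w\|^2$, then $\bE_k\|w\|^2 \le \bE_k\|\nabla f_{\zeta^{k+1}}(\tilde{x}^k) - \nabla f_{\zeta^{k+1}}(x^\star)\|^2 = \sigma_k^2$ (a variance is at most the corresponding second moment), while $\bE_k\|u\|^2 = \frac1n\sum_i\|\nabla f_i(x^k) - \nabla f_i(x^\star)\|^2 \le 2\max_i\nu_i\, D_F(x^k,x^\star)$ by co-coercivity, yields $\bE_k\|g^{k+1} - \nabla F(x^\star)\|^2 \le 2\alpha D_F(x^k,x^\star) + 2\sigma_k^2$, i.e. $\beta = 2$.

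For the recursion I would use that $\theta^{k+1}$ is Bernoulli$(p)$ and independent of $\cF_k$: conditionally on $\cF_k$, the point $\tilde{x}^{k+1}$ equals $x^k$ with probability $p$ and $\tilde{x}^k$ with probability $1 - p$. Hence, term by term in $i$,
\[
\bE_k\|\nabla f_i(\tilde{x}^{k+1}) - \nabla f_i(x^\star)\|^2 = p\,\|\nabla f_i(x^k) - \nabla f_i(x^\star)\|^2 + (1-p)\,\|\nabla f_i(\tilde{x}^k) - \nabla f_i(x^\star)\|^2.
\]
Averaging over $i$ gives $\bE_k(\sigma_{k+1}^2) = (1-p)\sigma_k^2 + p\cdot\frac1n\sum_i\|\nabla f_i(x^k) - \nabla f_i(x^\star)\|^2$, and bounding the last sum by $\alpha D_F(x^k,x^\star)$ exactly as above produces $\bE_k(\sigma_{k+1}^2) \le (1-p)\sigma_k^2 + p\alpha\, D_F(x^k,x^\star)$, i.e. $\rho = p$ and $\delta = \alpha p/2$.

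There is no deep obstacle here: the argument is essentially bookkeeping once the co-coercivity inequality and the measurability of $\tilde{x}^k$ are in place. The point that requires the most care is the bias--variance split in the second-moment step: obtaining the constant $\beta = 2$ hinges on isolating the zero-mean control-variate part $w$ and discarding the term $-\|\bE_k(\cdot)\|^2 \le 0$, rather than naively expanding all three gradient terms, which would inflate the constants. I would also double-check the indexing of $\tilde{x}^k$ against $\cF_k$ so that the ``vacuous conditional expectation'' claim in the definition of $\sigma_k^2$ is correct, since a shift by one index there would change where the $\bE_k$ bites.
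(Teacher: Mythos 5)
Your proof is correct and is essentially the argument the paper relies on: the paper simply defers to Lemma~A.11 of~\cite{gor20}, and your bias--variance split of $g^{k+1}-\nabla F(x^\star)$ into the term $\nabla f_{\zeta^{k+1}}(x^k)-\nabla f_{\zeta^{k+1}}(x^\star)$ plus a zero-mean control-variate part, combined with cocoercivity $\|\nabla f_i(x)-\nabla f_i(x^\star)\|^2\le 2\nu_i D_{f_i}(x,x^\star)$ and the Bernoulli case analysis for $\bE_k\sigma_{k+1}^2$, is exactly that standard proof with the same constants. No gaps.
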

\begin{proof}
    The proof is the same as the proof of Lemma~A.11 of~\cite{gor20}, which is only stated for $(x^k)_{k\in\mathbb{N}}$  generated by a specific algorithm, but remains true for any $(\cF_k)$-adapted stochastic process $(x^k)_{k\in\mathbb{N}}$.
\end{proof}

We can now exhibit our main results.  
In a nutshell,  $P^{-1}C(z^k)$ is replaced by the random realization $P^{-1}(g^{k+1},0)$
    and the last term of Eqn.~\eqref{eq:funda}, which is nonnegative,      is handled using Assumption~\ref{as:sto-grad}.

\subsection{The Stochastic PD3O Algorithm}

The Stochastic PD3O Algorithm, shown above,  has $\cO(1/k)$ ergodic convergence in the general case. A linear convergence result in the strongly convex setting is derived in Appendix~\ref{secappb}.
\begin{theorem}[Convergence of the Stochastic PD3O Algorithm]\label{th:cvx:PD3O}
Suppose that Assumption~\ref{as:sto-grad} holds.
Let $\kappa \eqdef \beta/\rho$, $\gamma, \tau >0$ be such that $\gamma \leq 1/{2(\alpha+\kappa\delta)}$ and $\gamma\tau\|L\|^2 < 1$.
Set $V^0 \eqdef \|v^{0} - v^\star\|_P^2 + \gamma^2 \kappa \sigma_{0}^2,$ where $v^0 = (p^0,y^0)$.
Then,  for every $k\in\mathbb{N}$,
\begin{equation*}
     \bE\left(\cL(\bar{x}^{k},y^\star) - \cL(x^\star,\bar{y}^{k+1})\right) \leq \frac{V^0}{k \gamma},
\end{equation*}
where $\bar{x}^{k} = \frac{1}{k} \sum_{j = 0}^{k-1} x^j$ and $\bar{y}^{k+1} = \frac{1}{k} \sum_{j = 1}^{k} y^j$.
\end{theorem}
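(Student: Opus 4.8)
The plan is to run the fundamental equality of Davis--Yin splitting (Lemma~\ref{lem:funda-DYS}) in the preconditioned space $\cZ_P$, with the true gradient replaced by its stochastic surrogate, and then to build a Lyapunov sequence coupling the iterate error with the variance $\sigma_k^2$. Concretely, I identify the Stochastic PD3O iterates with $\mathrm{DYS}(P^{-1}A,P^{-1}B,P^{-1}C)$ in $\cZ_P$, so that $v^k=(p^k,y^k)$, $z^k=(x^k,y^k)$ with $x^k=\prox_{\gamma R}(p^k)$ (the resolvent $J_{\gamma P^{-1}B}$ leaves the dual part untouched), and $u^{k+1}$ carries the updated dual variable $y^{k+1}$. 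Since $P$ acts as the identity on the $\cX$-component, replacing $\tilde C(z^k)=(\nabla F(x^k),0)$ by the random vector $(g^{k+1},0)$ turns Eq.~\eqref{eq:funda} into an identity for $\|v^{k+1}-v^\star\|_P^2$ whose last term reads $\gamma^2\|g^{k+1}-\nabla F(x^\star)\|^2$, while the penultimate term $-\gamma^2\|a^{k+1}+b^k-(a^\star+b^\star)\|^2\le 0$ is simply discarded.

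I then take the conditional expectation $\bE_k$. The three monotonicity cross-terms are lower-bounded by Bregman divergences: the $P^{-1}B$ block yields $\langle r^k-r^\star,x^k-x^\star\rangle\ge D_R(x^k,x^\star)$; the skew-symmetric part of $P^{-1}A$ contributes nothing, while its $H^*$-block gives a pathwise bound $\ge D_{H^*}(y^{k+1},y^\star)$; and, since $\bE_k(g^{k+1})=\nabla F(x^k)$ and $x^k$ is $\cF_k$-measurable, the gradient cross-term satisfies $\bE_k\langle (g^{k+1}-\nabla F(x^\star),0),z^k-z^\star\rangle_P=\langle\nabla F(x^k)-\nabla F(x^\star),x^k-x^\star\rangle\ge D_F(x^k,x^\star)$. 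For the leftover positive term I invoke Assumption~\ref{as:sto-grad}, i.e. $\bE_k\|g^{k+1}-\nabla F(x^\star)\|^2\le 2\alpha D_F(x^k,x^\star)+\beta\sigma_k^2$. This produces a bound on $\bE_k\|v^{k+1}-v^\star\|_P^2$ whose $D_F(x^k,x^\star)$ coefficient is $-2\gamma+2\gamma^2\alpha$ and which carries a stray $+\gamma^2\beta\sigma_k^2$.

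The crux is absorbing $\sigma_k^2$. I add $\gamma^2\kappa\sigma_{k+1}^2$ with $\kappa=\beta/\rho$ to form $V^{k+1}$ and apply the variance recursion $\bE_k(\sigma_{k+1}^2)\le(1-\rho)\sigma_k^2+2\delta D_F(x^k,x^\star)$. The choice $\kappa=\beta/\rho$ makes $\beta+\kappa(1-\rho)=\kappa$, so the $\sigma_k^2$ contributions recombine into exactly $\gamma^2\kappa\sigma_k^2$, reconstituting $V^k$; meanwhile the recursion injects an extra $2\gamma^2\kappa\delta D_F(x^k,x^\star)$, bringing the total $D_F$ coefficient to $-2\gamma+2\gamma^2(\alpha+\kappa\delta)$. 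The stepsize bound $\gamma\le 1/\bigl(2(\alpha+\kappa\delta)\bigr)$ forces this to be $\le-\gamma$, and using the nonnegativity of $D_R,D_{H^*}$ to lower their coefficients to the common value $\gamma$ I obtain
\begin{equation*}
\bE_k(V^{k+1})\le V^k-\gamma\,\bE_k\bigl(D_F(x^k,x^\star)+D_R(x^k,x^\star)+D_{H^*}(y^{k+1},y^\star)\bigr).
\end{equation*}
By Lemma~\ref{lem:duality-gap}, the bracketed quantity is precisely $\cL(x^k,y^\star)-\cL(x^\star,y^{k+1})$.

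Finally I take total expectations, telescope over $j=0,\dots,k-1$, and drop the nonnegative $\bE(V^k)$, which gives $\gamma\sum_{j=0}^{k-1}\bE\bigl(\cL(x^j,y^\star)-\cL(x^\star,y^{j+1})\bigr)\le V^0$. Since $(x,y)\mapsto\cL(x,y^\star)-\cL(x^\star,y)$ is jointly convex, Jensen's inequality applied to the averages $\bar x^k$ and $\bar y^{k+1}$ delivers the claimed $\cO(1/k)$ ergodic rate. The main obstacle is the stochastic step: unlike the deterministic case, cocoercivity of $\tilde C$ can no longer absorb the $\gamma^2\|g^{k+1}-\nabla F(x^\star)\|^2$ term, so the whole argument hinges on correctly coupling $\|v^k-v^\star\|_P^2$ with $\gamma^2\kappa\sigma_k^2$ and on checking that the stepsize bound keeps the net $D_F$ coefficient negative; a secondary technical point is the bookkeeping that places $D_{H^*}$ at $y^{k+1}$, which explains the index shift in $\bar y^{k+1}$.
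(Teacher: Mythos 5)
Your proposal is correct and follows essentially the same route as the paper: it runs the fundamental DYS equality (Lemma~\ref{lem:funda-DYS}) for $\mathrm{DYS}(P^{-1}A,P^{-1}B,P^{-1}C)$ in the $P$-metric with the stochastic gradient, lower-bounds the monotonicity cross-terms by $D_F(x^k,x^\star)$, $D_R(x^k,x^\star)$ and $D_{H^*}(y^{k+1},y^\star)$, absorbs $\gamma^2\bE_k\|g^{k+1}-\nabla F(x^\star)\|^2$ via Assumption~\ref{as:sto-grad} into the Lyapunov function $V^k=\|v^k-v^\star\|_P^2+\kappa\gamma^2\sigma_k^2$ with $\kappa=\beta/\rho$, identifies the Bregman sum with the duality gap via Lemma~\ref{lem:duality-gap}, and concludes by telescoping and Jensen. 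This matches the paper's Lemma~\ref{lem:ineq:PD3O} (with $\mu_F=0$) followed by its proof of Theorem~\ref{th:cvx:PD3O}, including the index bookkeeping $d^{k+1}=y^{k+1}$ that explains the shift in $\bar{y}^{k+1}$.
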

\begin{proof}
    Using Lemma~\ref{lem:ineq:PD3O}, the convexity of $F$, $R$, $H^*$, and Lemma~\ref{lem:duality-gap},
\begin{align*}
    \bE_k \|v^{k+1} - v^\star\|_P^2 &+ \kappa \gamma^2\bE_k\sigma_{k+1}^2 \leq \|v^k - v^\star\|_P^2 + \kappa \gamma^2\left(1-\rho+\frac{\beta}{\kappa }\right)\sigma_k^2\notag\\
    &\quad-2\gamma(1-\gamma(\alpha  +\kappa \delta))\bE_k\left(\cL(x^k,d^{\star}) - \cL(x^\star,d^{k+1})\right).
\end{align*}
We have $1 - \rho + \beta/\kappa  = 1$, $\gamma \leq 1/2(\alpha  +\kappa \delta)$. Set 
$
V^k \eqdef \|v^{k} - v^\star\|_P^2 + \kappa \gamma^2\sigma_{k}^2
$, for every $k\in\mathbb{N}$.  
Then
$
    \bE_k V^{k+1} \leq V^k -\gamma \bE_k\left(\cL(x^{k},d^\star) - \cL(x^\star,d^{k+1})\right)
$. 
Taking the expectation,
$
    \gamma\bE\left(\cL(x^{k},d^\star) - \cL(x^\star,d^{k+1})\right) \leq \bE V^k - \bE V^{k+1}
$. 
Iterating and using the nonnegativity of $V^k$,
$
    \gamma \sum_{j = 0}^{k-1} \bE\left(\cL(x^{j},d^\star) - \cL(x^\star,d^{j+1})\right) \leq \bE V^0
$. 
Finally, note that $d^{k+1} = y^{k+1}$ and $d^{\star} = y^{\star}$. Indeed, $y^k = q^k$ and $q^{k+1} = q^k + d^{k+1} - y^k = d^{k+1}$.
We can conclude using the convex-concavity of $\cL$.
\end{proof}
 In the deterministic case $g^{k+1} = \nabla F(x^k)$, we recover the same rate as in  \cite[Theorem 2]{yan18}.
\begin{remark}[Primal--Dual gap]
    Deriving a similar bound on the stronger primal--dual gap $(F+R+H\circ L)(\bar{x}^{k})+\big((F+R)^*\circ (-L)+H^*\big)(\bar{y}^{k})$ 
    requires additional assumptions; for instance, even for the Chambolle--Pock algorithm, which is the particular case of the PD3O, PPDY and Condat--V\~u algorithms when $F=0$, the best available result \cite[Theorem 1]{cha162} is not stronger than Theorem \ref{th:cvx:PD3O} 
 \end{remark}
\begin{remark}[Particular case of SGD]
    In the case where $H =0$ and $L = 0$, the Stochastic PD3O Algorithm boils down to proximal stochastic gradient descent (SGD) and Theorem~\ref{th:cvx:PD3O} implies that
        $\bE\left((F+R)(\bar{x}^{k}) - (F+R)(x^\star)\right) \leq V^0/(\gamma k)
        $. 
   This $\cO(1/k)$ ergodic convergence rate 
   unifies known results on SGD in the non-strongly-convex case, whenever the stochastic gradient satisfies Assumption~\ref{as:sto-grad}. 
   \end{remark}

\subsection{The Stochastic PDDY Algorithm}
We now analyze the proposed Stochastic PDDY Algorithm, shown above. For it too, we have $\cO(1/k)$ ergodic convergence in the general case. A linear convergence result in the strongly convex setting is derived in Appendix~\ref{secappb}.
    \begin{theorem}[Convergence of the Stochastic PDDY Algorithm]
        \label{th:cvx:PDDY}
        Suppose that Assumption~\ref{as:sto-grad} holds.
        Let $\kappa \eqdef \beta/\rho$, $\gamma, \tau >0$ be such that $\gamma \leq 1/{2(\alpha+\kappa\delta)}$ and $\gamma\tau\|L\|^2 < 1$.
        Define $V^0 \eqdef \|v^{0} - v^\star\|_P^2 + \gamma^2 \kappa \sigma_{0}^2,$ where $v^0 = (p^0,y^0)$.
        Then, for every $k\in\mathbb{N}$,
        \begin{equation*}
             \bE\left(D_F(\bar{x}^{k},x^\star)+D_{H^*}(\bar{y}^{k+1},y^\star)+ D_R(\bar{s}^{k+1},s^\star)\right) \leq \frac{V^0}{k \gamma},
        \end{equation*}
        where $\bar{x}^{k} = \frac{1}{k} \sum_{j = 0}^{k-1} x^j$, $\bar{y}^{k+1} = \frac{1}{k} \sum_{j = 1}^{k} y^j$ and $\bar{s}^{k+1} = \frac{1}{k} \sum_{j = 1}^{k} s^j$.
\end{theorem}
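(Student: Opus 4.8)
The plan is to transpose the deterministic fundamental equality of Lemma~\ref{lem:funda-DYS} to the stochastic iteration and then run a Lyapunov/telescoping argument in the preconditioned space $\cZ_P$. Recall that the Stochastic PDDY iteration is exactly $\text{DYS}(P^{-1}B,P^{-1}A,P^{-1}C)$ run in $\cZ_P$, with $A,B,C,P$ as in~\eqref{eq:ABCP} (so $\tilde A=P^{-1}B$, $\tilde B=P^{-1}A$, $\tilde C=P^{-1}C$), except that the forward term $\tilde C(z^k)=P^{-1}(\nabla F(x^k),0)$ is replaced by its stochastic surrogate $\hat C^k\eqdef P^{-1}(g^{k+1},0)$. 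Since $\tilde C(z^k)$ enters the derivation of~\eqref{eq:funda} only through the update of $u^{k+1}$, the identical computation yields~\eqref{eq:funda} verbatim, with $\tilde C(z^k)$ replaced by $\hat C^k$ in the two places where it occurs (the cross term and the final squared term) and all inner products and norms taken in $\cZ_P$.

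First I would identify the three inner-product terms as Bregman divergences. Writing $z^\star=(x^\star,y^\star)$, $u^\star=z^\star$ (so $s^\star=x^\star$), and using the block structure of $A,B,C,P$, a direct expansion shows that the skew-symmetric part of $A$ cancels, leaving $\ps{b^k-b^\star,z^k-z^\star}_P=D_{H^*}(y^k,y^\star)+D_{H^*}(y^\star,y^k)$ and $\ps{a^{k+1}-a^\star,u^{k+1}-u^\star}_P=D_R(s^{k+1},s^\star)+D_R(s^\star,s^{k+1})$, where $b^k,a^{k+1}$ carry the subgradients $h^k\in\partial H^*(y^k)$, $r^{k+1}\in\partial R(s^{k+1})$ supplied by Lemma~\ref{lem:funda-DYS}. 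For the forward term, the identity block of $P$ gives $\ps{\hat C^k-\tilde C(z^\star),z^k-z^\star}_P=\ps{g^{k+1}-\nabla F(x^\star),x^k-x^\star}$ and $\|\hat C^k-\tilde C(z^\star)\|_P^2=\|g^{k+1}-\nabla F(x^\star)\|^2$. Here I would use that $x^k,y^k$ are $\cF_k$-measurable, so that $\bE_k(g^{k+1})=\nabla F(x^k)$ turns the first identity into $\ps{\nabla F(x^k)-\nabla F(x^\star),x^k-x^\star}=D_F(x^k,x^\star)+D_F(x^\star,x^k)$, while Assumption~\ref{as:sto-grad} bounds the conditional expectation of the second by $2\alpha D_F(x^k,x^\star)+\beta\sigma_k^2$.

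Next I would set up the Lyapunov function $V^k\eqdef\|v^k-v^\star\|_P^2+\gamma^2\kappa\sigma_k^2$ with $\kappa=\beta/\rho$. Taking $\bE_k$ of the stochastic fundamental equality, discarding the nonpositive term $-\gamma^2\|a^{k+1}+b^k-(a^\star+b^\star)\|_P^2$ as well as the nonnegative ``reversed'' divergences $D_{H^*}(y^\star,y^k),D_R(s^\star,s^{k+1}),D_F(x^\star,x^k)$, and adding $\gamma^2\kappa\,\bE_k(\sigma_{k+1}^2)$ bounded via the third part of Assumption~\ref{as:sto-grad}, two cancellations occur: the $\sigma_k^2$ terms cancel exactly because $\kappa\rho=\beta$, and the net coefficient of $D_F(x^k,x^\star)$ equals $-2\gamma\big(1-\gamma(\alpha+\kappa\delta)\big)\le-\gamma$ by the stepsize condition $\gamma\le 1/(2(\alpha+\kappa\delta))$. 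This yields the one-step inequality
\[
\gamma\,\bE_k\big(D_F(x^k,x^\star)+D_{H^*}(y^k,y^\star)+D_R(s^{k+1},s^\star)\big)\le V^k-\bE_k(V^{k+1}).
\]
Taking total expectations, summing over $k=0,\dots,K-1$, telescoping and using $V^K\ge0$ gives $\gamma\sum_{k=0}^{K-1}\bE(\cdots)\le V^0$; finally Jensen's inequality (each of $D_F(\cdot,x^\star)$, $D_{H^*}(\cdot,y^\star)$, $D_R(\cdot,s^\star)$ is convex in its first argument) produces the ergodic averages and the $V^0/(K\gamma)$ bound. A last bookkeeping step reconciles the DYS dual index with the algorithm's (the DYS iterate $y^k$ equals the algorithm's $y^{k+1}$), which explains the shift to $\bar y^{k+1}$ and $\bar s^{k+1}$.

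I expect the main obstacle to be the careful adaptation of Lemma~\ref{lem:funda-DYS} to the stochastic setting together with the measurability bookkeeping: one must verify that the only occurrences of the forward term are the two I substitute, that $x^k,y^k$ are $\cF_k$-measurable so the unbiasedness $\bE_k(g^{k+1})=\nabla F(x^k)$ may be invoked inside the cross term (whereas $s^{k+1}$, which depends on $g^{k+1}$, must stay under $\bE_k$), and that the two cancellations—the $\sigma_k^2$ telescoping driven by $\kappa\rho=\beta$ and the sign of the $D_F$ coefficient—are exactly what the hypotheses $\kappa=\beta/\rho$ and $\gamma\le1/(2(\alpha+\kappa\delta))$ deliver. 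The remaining computations, namely expanding the $P$-inner products and the Jensen step, are routine.
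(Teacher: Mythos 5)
Your proposal is correct and follows essentially the same route as the paper: the paper likewise instantiates Lemma~\ref{lem:funda-DYS} for DYS$(P^{-1}B,P^{-1}A,P^{-1}C)$ in the $P$-metric with $g^{k+1}$ in place of $\nabla F(x^k)$, reduces the $P$-inner products to the monotonicity/Bregman terms for $\partial H^*$, $\partial R$ and $F$, takes $\bE_k$ using Assumption~\ref{as:sto-grad}, and runs the same Lyapunov function $V^k=\|v^k-v^\star\|_P^2+\kappa\gamma^2\sigma_k^2$ with $\kappa=\beta/\rho$ before telescoping and applying Jensen (this is exactly Lemma~\ref{lem:PDDY} plus the short argument that follows it). Your measurability and index-shift remarks, the sign of the $D_F$ coefficient under $\gamma\le 1/(2(\alpha+\kappa\delta))$, and the discarding of the nonpositive squared term all match the paper's treatment.
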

\begin{proof}
Using Lemma~\ref{lem:PDDY} and the convexity of $F$, $R$, $H^*$,
\begin{align*}
    \bE_k &\|v^{k+1} - v^\star\|_P^2 + \kappa \gamma^2\bE_k\sigma_{k+1}^2 \leq \|v^k - v^\star\|_P^2 + \kappa \gamma^2\left(1-\rho+\frac{\beta}{\kappa }\right)\sigma_k^2\\
    &-2\gamma\big(1-\gamma(\alpha  +\kappa \delta)\big) \left(D_F(x^{k},x^\star) + D_{H^*}(y^{k},y^\star)+\bE_k D_R(s^{k+1},s^\star)\right).
\end{align*}
Since $1 - \rho + \beta/\kappa  = 1$, $\gamma \leq 1/2(\alpha  +\kappa \delta)$. Set 
$
V^k \eqdef \|v^{k} - v^\star\|_P^2 + \kappa \gamma^2\sigma_{k}^2
$. 
Then
\begin{equation*}
    \bE_k V^{k+1} \leq V^k -\gamma \bE_k\left(D_F(x^{k},x^\star) + D_{H^*}(y^{k},y^\star)+D_R(s^{k+1},s^\star)\right).
\end{equation*}
Taking the expectation, 
$
    \gamma\bE \left(D_F(x^{k},x^\star) + D_{H^*}(y^{k},y^\star)+D_R(s^{k+1},s^\star)\right) \leq \bE V^k - \bE V^{k+1}
$. 
Iterating and using the nonnegativity of $V^k$,
$
    \gamma \sum_{j = 0}^{k-1} \bE \big(D_F(x^{k},x^\star)+{}$ $ D_{H^*}(y^{k},y^\star)+D_R(s^{k+1},s^\star)\big) \leq \bE V^0
$. 
We conclude using the convexity of the Bregman divergence in its first variable.
\end{proof}

\section{Linearly Constrained Smooth Optimization}\label{sec7}

In this section, we consider the problem
\begin{equation}
\minimize_{x\in\mathcal{X}} \,F(x)\quad \mbox{s.t.}\quad Lx=b,\label{pbconstr0}
\end{equation}
where $L:\mathcal{X}\rightarrow \mathcal{Y}$ is a linear operator,
 $\mathcal{X}$ and  $\mathcal{Y}$ are real Hilbert spaces, $F$ is a $\nu$-smooth convex function, for some $\nu>0$, 
 and  $b \in \ran(L)$, the range of $L$. This is a particular case of Problem~\eqref{eq:original-pb}  with $R=0$ and 
 $H =\iota_b$. We suppose that a solution $x^\star$ exists, satisfying $Lx^\star=b$ and $0\in\nabla F(x^\star)+L^*y^\star$ for some $y^\star\in\cY$. 
 The stochastic PD3O and PDDY algorithms both revert to the same algorithm, shown above, which   
    we call  Linearly Constrained Stochastic Gradient Descent (LiCoSGD). It is fully split: it does not make use of projections  onto the affine space $\{x \in \cX, L x = b\}$ and only makes calls to $L$ and $L^*$. 
    In the deterministic case $g^{k+1} = \nabla F(x^k)$, LiCoSGD reverts to an instance of the algorithm  first proposed by Loris and Verhoeven in~\cite{lor11} and rediscovered independently as the PDFP2O algorithm~\cite{che13} and the Proximal Alternating Predictor--Corrector (PAPC) algorithm~\cite{dro15}.  Convergence of this algorithm follows from Theorem \ref{th:pddy-cv}, see other results in \cite{con19,con20}. Thus, LiCoSGD is a stochastic extension of this algorithm, 
for which 
Theorem \ref{th:cvx:PD3O} becomes:
\begin{theorem}[Convergence of LiCoSGD]
\label{th:cvx:lico}
Suppose that Assumption~\ref{as:sto-grad} holds.
Let $\kappa \eqdef \beta/\rho$, $\gamma, \tau >0$ be such that $\gamma \leq 1/{2(\alpha+\kappa\delta)}$ and $\gamma\tau\|L\|^2 < 1$.
Set $V^0 \eqdef \|v^{0} - v^\star\|_P^2 + \gamma^2 \kappa \sigma_{0}^2,$ where $v^0 = (w^0,y^0)$.
Then, for every $k\in\mathbb{N}$,
\begin{equation}
     \bE\left(F(\bar{x}^{k})-F(x^\star)+\langle L\bar{x}^{k}-b,y^\star\rangle
     \right) \leq \frac{V^0}{k \gamma},
\end{equation}
where $\bar{x}^{k} = \frac{1}{k} \sum_{j = 0}^{k-1} x^j$, 
$x^\star$ and $y^\star$ are some primal and dual solutions.
\end{theorem}
The convex function $x\mapsto F(x)-F(x^\star)+\langle Lx-b,y^\star\rangle$ is nonnegative and its minimum is zero, attained at $x^\star$. Under additional assumptions, like strict convexity around $x^\star$, this function takes value zero only if $F(x)=F(x^\star)$ and $Lx=b$, so that $x$ is a solution.

We now state an important result: strong convexity of $F$ is sufficient to get linear convergence. We denote by $\omega(W)$ the smallest positive eigenvalue of a positive self-adjoint linear operator $W$. Then it is easy to show that for every $y \in \ran(L)$, $\omega(LL^*)\|y\|^2 \leq \|L^* y\|^2$. Also, $\omega(LL^*)=\omega(L^*L)$.

   \begin{theorem}[Linear convergence of LiCoSGD with $F$ strongly convex]
   \label{th:LV0}
   Suppose that Assumption~\ref{as:sto-grad} holds, that $F$ is $\mu_F$-strongly convex, for some $\mu_F> 0$, and  that $y^0 \in \ran(L)$. Let $x^\star$ be the unique solution of \eqref{pbconstr0}, $y^\star$ be the unique element of $\ran(L)$ such that $\nabla F(x^\star) + L^* y^{\star} = 0$. Suppose that $\gamma>0$ and $\tau >0$ are such that $\gamma\tau\|L\|^2 < 1$ and $\gamma \leq 1/{\alpha + \kappa\delta}$, for some $\kappa > \beta/\rho$.
Define, for every $k\in\mathbb{N}$,
   \begin{equation}
       \label{eq:lyapunov-lin}
       V^k \eqdef \|x^{k} - x^\star\|^2+ \left(1+\tau\gamma\omega(L^* L)\right)\|y^{k} - y^\star\|_{\gamma,\tau}^2 + \kappa\gamma^2\bE \sigma_{k}^2,
   \end{equation}
   and
   \begin{equation}
       \label{eq:rate-lin}
       r \eqdef \max\left(1-\gamma\mu_F,1-\rho+\frac{\beta}{\kappa},\frac{1}{1+\tau\gamma\omega(L^* L)}\right)<1.
   \end{equation}
  Then, for every $k\in\mathbb{N}$, $\bE V^{k} \leq r^k V^0$. 
   \end{theorem}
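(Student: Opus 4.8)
The plan is to prove the one-step contraction $\bE_k V^{k+1} \le r V^k$ for every $k$, and then conclude $\bE V^k \le r^k V^0$ by taking total expectations and iterating. First I would write the LiCoSGD iteration explicitly by specializing the Stochastic PDDY (equivalently PD3O) algorithm to $R = 0$ and $H = \iota_{\{b\}}$: since $\mathrm{prox}_{\gamma R} = \mathrm{Id}$ and $\partial H^* \equiv b$, so that $\mathrm{prox}_{\tau H^*}(\cdot) = (\cdot) - \tau b$, the proximal steps collapse and the updates for $x^{k+1}$ and $y^{k+1}$ become affine in $g^{k+1}$, $x^k$, $y^k$, coupled only through $L$ and $L^*$. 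I would then expand the squared distances $\|x^{k+1} - x^\star\|^2$ and $\|y^{k+1} - y^\star\|_{\gamma,\tau}^2$ directly from these updates; alternatively one can specialize the fundamental equality \eqref{eq:funda} of Lemma~\ref{lem:funda-DYS} in the metric $\mathcal{Z}_P$, noting that with $R = 0$ the $\partial R$-monotonicity term vanishes and with $\partial H^* \equiv b$ the $\partial H^*$-monotonicity term vanishes as well, so that the only surviving monotonicity gain comes from $\nabla F$.

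Next I would take the conditional expectation $\bE_k$ and dispose of the stochastic gradient. Unbiasedness $\bE_k(g^{k+1}) = \nabla F(x^k)$ turns every cross term into $\nabla F(x^k) - \nabla F(x^\star)$, while the nonnegative second-moment term is controlled by Assumption~\ref{as:sto-grad} through $\bE_k\|g^{k+1} - \nabla F(x^\star)\|^2 \le 2\alpha D_F(x^k,x^\star) + \beta\sigma_k^2$. The primal contraction then comes from $\mu_F$-strong convexity of $F$ in the form $\ps{\nabla F(x^k) - \nabla F(x^\star), x^k - x^\star} \ge \mu_F \|x^k - x^\star\|^2$, which produces the factor $1 - \gamma\mu_F$ on $\|x^k - x^\star\|^2$; the additional $D_F$ terms generated by the variance bound and by the recursion $\bE_k\sigma_{k+1}^2 \le (1-\rho)\sigma_k^2 + 2\delta D_F(x^k,x^\star)$ are absorbed using the complementary bound $\ps{\nabla F(x^k) - \nabla F(x^\star), x^k - x^\star} \ge 2 D_F(x^k,x^\star)$ together with the stepsize condition $\gamma \le 1/(\alpha + \kappa\delta)$. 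Choosing $\kappa > \beta/\rho$ makes the variance component contract by $1 - \rho + \beta/\kappa < 1$.

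The crux of the proof is the dual contraction, since $\partial H^*$ is constant and the dual variable enjoys no monotonicity-based decrease on its own; the idea is to extract contraction from the skew-symmetric coupling through $L$. Because $y^0 \in \ran(L)$ and the LiCoSGD update keeps the dual iterate in $\ran(L)$ (which I would verify by induction from the update rule), the increment $y^k - y^\star$ lies in $\ran(L)$, on which $\|L^*(y^k - y^\star)\|^2 \ge \omega(L^*L)\|y^k - y^\star\|^2$. Feeding this lower bound into the coupling terms of the one-step expansion yields a surplus of order $\tau\gamma\omega(L^*L)\|y^k - y^\star\|_{\gamma,\tau}^2$, which is exactly matched by the inflated weight $1 + \tau\gamma\omega(L^*L)$ carried by the dual term in the Lyapunov function \eqref{eq:lyapunov-lin}; after rearrangement this delivers the factor $1/(1 + \tau\gamma\omega(L^*L))$.

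Finally I would assemble the three estimates so that each of the three contraction factors in \eqref{eq:rate-lin} dominates its matching Lyapunov component, giving $\bE_k V^{k+1} \le r V^k$, and conclude by the tower property and induction. I expect the main obstacle to be the dual step: one must establish the $\ran(L)$-invariance of the iterates, apply the smallest-positive-eigenvalue bound in the correct $\|\cdot\|_{\gamma,\tau}$ metric, and verify that the resulting surplus precisely balances the inflated dual weight so that the bookkeeping closes with the stated rate $r$.
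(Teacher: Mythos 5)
Your overall architecture matches the paper's: a one-step contraction $\bE_k V^{k+1}\le rV^k$ obtained from the Davis--Yin fundamental equality (Lemma~\ref{lem:funda-DYS}, specialized through Lemma~\ref{lem:ineq:PD3O} to $R=0$, $H=\iota_b$), with the primal factor from strong convexity, the variance factor from Assumption~\ref{as:sto-grad} with $\kappa>\beta/\rho$, and the dual factor from $\ran(L)$-invariance of the iterates combined with the smallest-positive-eigenvalue bound (the paper's Lemma~\ref{lem:linalg}). However, two steps as you describe them would fail if executed literally.

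First, the inequality $\ps{\nabla F(x^k)-\nabla F(x^\star),x^k-x^\star}\ge 2D_F(x^k,x^\star)$ is false for general smooth convex $F$: one only has $\ps{\nabla F(x)-\nabla F(x'),x-x'}=D_F(x,x')+D_F(x',x)$, and $D_F(x',x)$ need not dominate $D_F(x,x')$. The correct move (used in the paper) is to bound the second, reversed Bregman term by $\tfrac{\mu_F}{2}\|x^k-x^\star\|^2$, yielding $-2\gamma\ps{\cdot,\cdot}\le -2\gamma D_F(x^k,x^\star)-\gamma\mu_F\|x^k-x^\star\|^2$; the single $-2\gamma D_F$ then absorbs the $2\gamma^2(\alpha+\kappa\delta)D_F$ generated by the variance bound and the $\sigma$-recursion under $\gamma\le 1/(\alpha+\kappa\delta)$. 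Second, and more importantly, the dual contraction cannot be ``extracted from the skew-symmetric coupling through $L$'': the monotonicity term $\ps{A(u^{k+1})-A(u^\star),u^{k+1}-u^\star}$ contributed by the skew part of $A$ vanishes identically, and with $\partial H^*$ constant the whole $A$-term gives nothing. The surplus actually comes from the negative quadratic residual $-\gamma^2\|a^{k+1}+b^k-(a^\star+b^\star)\|^2$ in Eq.~\eqref{eq:funda}, whose $\cX$-component is $-\gamma^2\|L^*(y^{k+1}-y^\star)\|^2$ --- note, evaluated at the \emph{next} dual iterate. Combined with $\omega(L^*L)\|y^{k+1}-y^\star\|^2\le\|L^*(y^{k+1}-y^\star)\|^2$ on $\ran(L)$ and $\|q\|_{\gamma,\tau}^2\le\frac{\gamma}{\tau}\|q\|^2$, this upgrades the coefficient of $\bE_k\|y^{k+1}-y^\star\|_{\gamma,\tau}^2$ on the left-hand side to $1+\tau\gamma\omega(L^*L)$, which is precisely why the Lyapunov weight is inflated and the dual rate is $1/(1+\tau\gamma\omega(L^*L))$ rather than the $1-\tau\gamma\omega(L^*L)$ your ``surplus on $\|y^k-y^\star\|_{\gamma,\tau}^2$'' would produce. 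A careful direct expansion of the affine updates would surface this term, but the mechanism you name is not the one that makes the bookkeeping close.
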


\begin{proof}
Noting that $y^\star = d^\star = q^\star$ and applying Lemma~\ref{lem:ineq:PD3O} with $\gamma \leq (\alpha  +\kappa \delta)$,
\begin{align*}
    &\bE_k \|p^{k+1} - p^\star\|^2 + \bE_k\|q^{k+1} - q^\star\|_{\gamma,\tau}^2 +  \kappa \gamma^2\bE_k\sigma_{k+1}^2 \leq \|p^{k} - p^\star\|^2 + \|q^{k} - q^\star\|_{\gamma,\tau}^2 \\
    &\quad-\gamma\mu_F\|x^{k}-x^\star\|^2+ \kappa \gamma^2\left(1-\rho+\frac{\beta}{\kappa }\right)\sigma_k^2-\gamma^2\|P^{-1}A(u^{k+1}) - P^{-1}A(u^{\star})\|_P^2.\notag
\end{align*}
Since the component of $P^{-1}A(u^{k+1}) - P^{-1}A(u^{\star})$ in $\cX$ is $L^* d^{k+1} - L^* d^{\star}$, we have
\begin{align*}
    \bE_k \|p^{k+1} &- p^\star\|^2 + \bE_k \|q^{k+1} - q^\star\|_{\gamma,\tau}^2 +  \kappa \gamma^2\bE_k\sigma_{k+1}^2 \leq \|x^{k} - x^\star\|^2 + \|q^{k} - q^\star\|_{\gamma,\tau}^2 \\
    &\quad-\gamma\mu_F\|p^{k}-p^\star\|^2+ \kappa \gamma^2\left(1-\rho+\frac{\beta}{\kappa }\right)\sigma_k^2-\gamma^2\|L^* d^{k+1} - L^* d^{\star}\|^2.\notag
\end{align*}
Inspecting the iterations of the algorithm, one can see that $d^{0} \in \ran(L)$ implies $d^{k+1} \in \ran(L)$. Since $d^\star \in \ran(L)$, $d^{k+1} - d^\star \in \ran(L)$. Therefore, 
$\omega(LL^*)\|d^{k+1} - d^{\star}\|^2 \leq \|L^* d^{k+1} - L^* d^{\star}\|^2$. Since $q^{k+1} = d^{k+1} = y^{k+1}$ and $x^k = p^k$,
\begin{align*}
    &\bE_k \|x^{k+1} - x^\star\|^2 + (1+\gamma\tau\omega(LL^*))\bE_k\|y^{k+1} - y^\star\|_{\gamma,\tau}^2 + \kappa \gamma^2\bE_k\sigma_{k+1}^2\notag\\
   &\quad\leq (1-\gamma\mu_F)\|x^{k} - x^\star\|^2 + \|y^{k} - y^\star\|_{\gamma,\tau}^2+ \kappa \gamma^2\left(1-\rho+\frac{\beta}{\kappa}\right)\sigma_k^2.
    \end{align*}
Thus, by setting $V^k$ as in  \eqref{eq:lyapunov-lin} and $r$ as in \eqref{eq:rate-lin}, we have $
    \bE_k V^{k+1} \leq r V^k$.
\end{proof}

   To the best of our knowledge, even in the deterministic case (with $\alpha = \nu$, $\rho = 1$, $\delta = \beta = 0$, $\kappa=1$), this is a first time that a fully split algorithm using $\nabla F$, $L$ and $L^*$ is shown to converge linearly to a solution of \eqref{pbconstr0}, whenever $F$ is strongly convex. Also, the knowledge of $\mu_F$ is not needed.
   We discuss the application of LiCoSGD to decentralized optimization in Appendix~\ref{apppridec}.

\section{Experiments}\label{sec:exp}

We present numerical experiments for the PDDY, PD3O and Condat--V\~u (CV)~\cite[Algorithm 3.1]{con13} algorithms. We observed that the performance of these algorithms is nearly identical, when the same stepsizes are used; but the PDDY and PD3O algorithms have a larger range of stepsizes than the CV algorithm, so that they are often faster after tuning. 
We used $\gamma\tau \|L\|^2=0.999$, which was always the best choice for these two algorithms. 
So, we do not provide direct comparisons in the plots. Instead, we focus on how the choice of the stochastic gradient estimator affects the convergence speed; we compare the true gradient, the standard stochastic gradient estimator (SGD), the VR estimators SAGA \cite{def14} and SVRG \cite{joh13a,zha13,xia14}. 
We used closed-form expressions for $\nu$ and tuned the stepsizes for all methods by running logarithmic grid search with factor 1.5 over multiples of $\frac{1}{\nu}$. We used a batch size of 16 for better parallelism in the stochastic estimators.
For SGD, we used a small value of $\gamma$, such as $\frac{0.01}{\nu}$. 
\smallskip

\begin{figure}[t]
	\centering
	\includegraphics[scale=0.36]{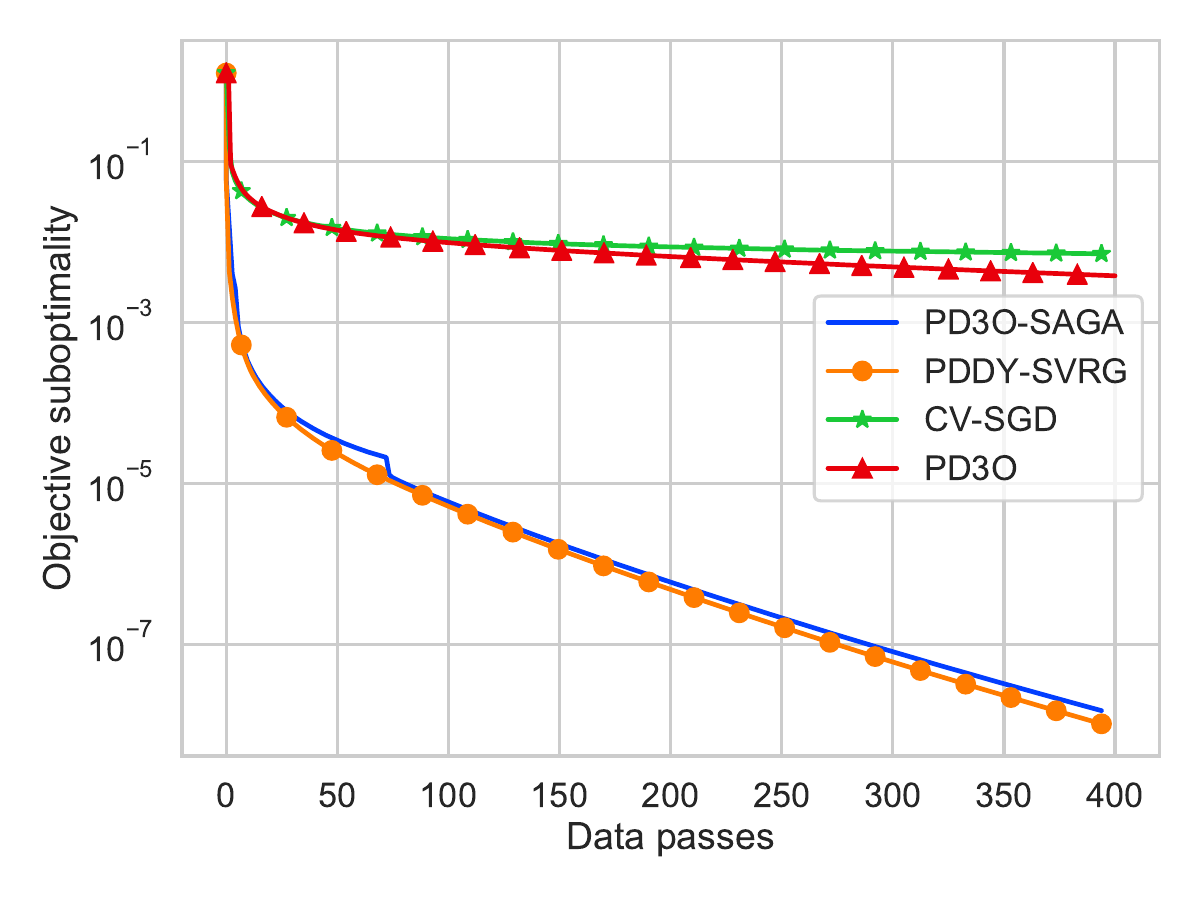}
	\includegraphics[scale=0.36]{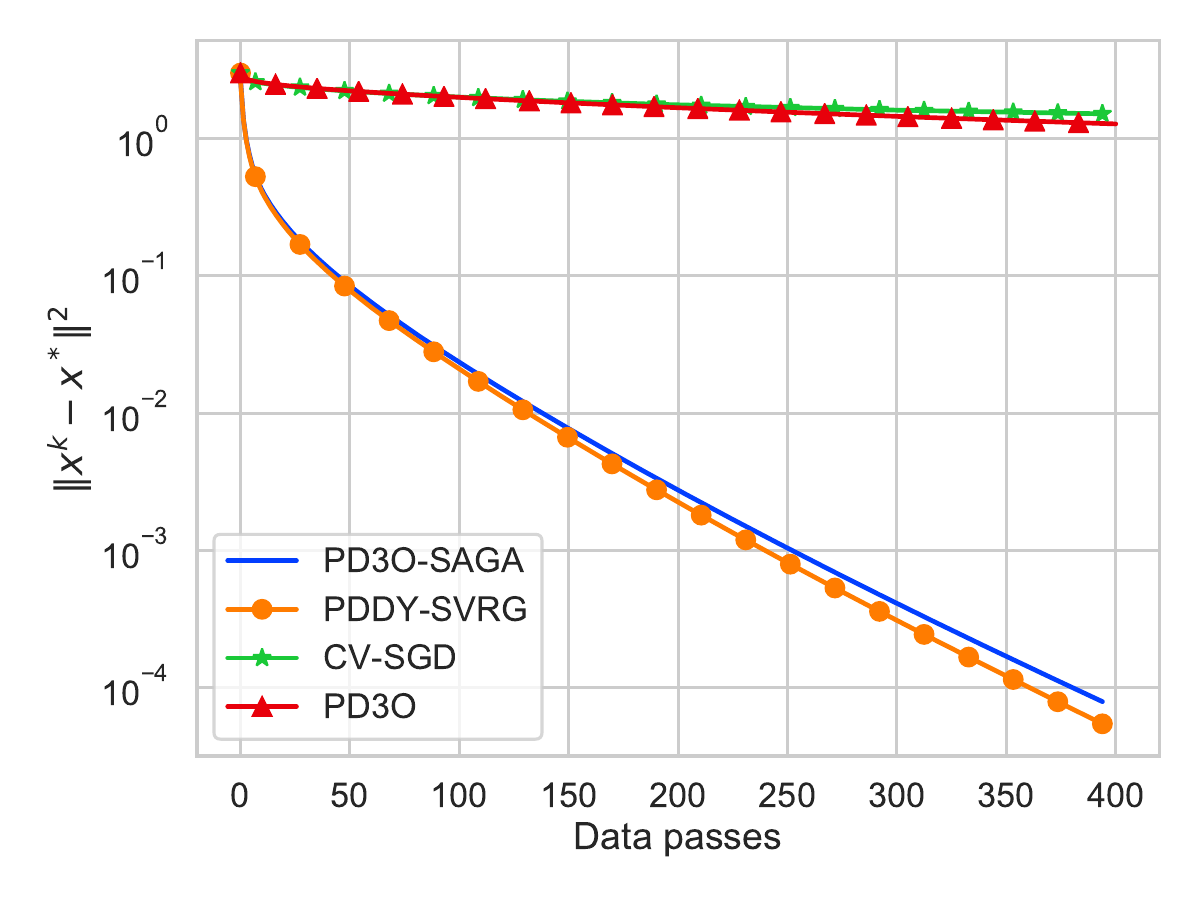}
	\caption{Results for the PCA-Lasso experiment. Left: convergence w.r.t.\ the objective function; right: convergence in norm.
}\label{fig1}
\end{figure}

\subsection{PCA-Lasso}

In a recent work~\cite{tay21}, the difficult PCA-based Lasso problem was considered: 
$\min_x \frac{1}{2}\|Wx - a\|^2 + \lambda\|x\|_1 + \lambda_1\sum_{i=1}^m \|L_i x\|$, 
where $W\in\mathbb{R}^{n\times p}$, $a\in\mathbb{R}^n$, $\lambda, \lambda_1>0$ are given.
 We generated 10 matrices $L_i$  randomly with standard normal i.i.d.\ entries, each with 20 rows. $W$ and $y$ were taken from the `mushrooms' dataset in the libSVM base~\cite{chang2011libsvm}. We chose $\lambda=\frac{\nu}{10n}$ and $\lambda_1=\frac{2\nu}{nm}$, where $\nu$ is needed to compensate for the fact that we do not normalize the objective. The results are shown in Fig.~\ref{fig1}. The advantage of using a VR stochastic gradient estimate is clear, with SAGA and SVRG being very similar.

\begin{figure}[t]
	\centering
	\includegraphics[scale=0.36]{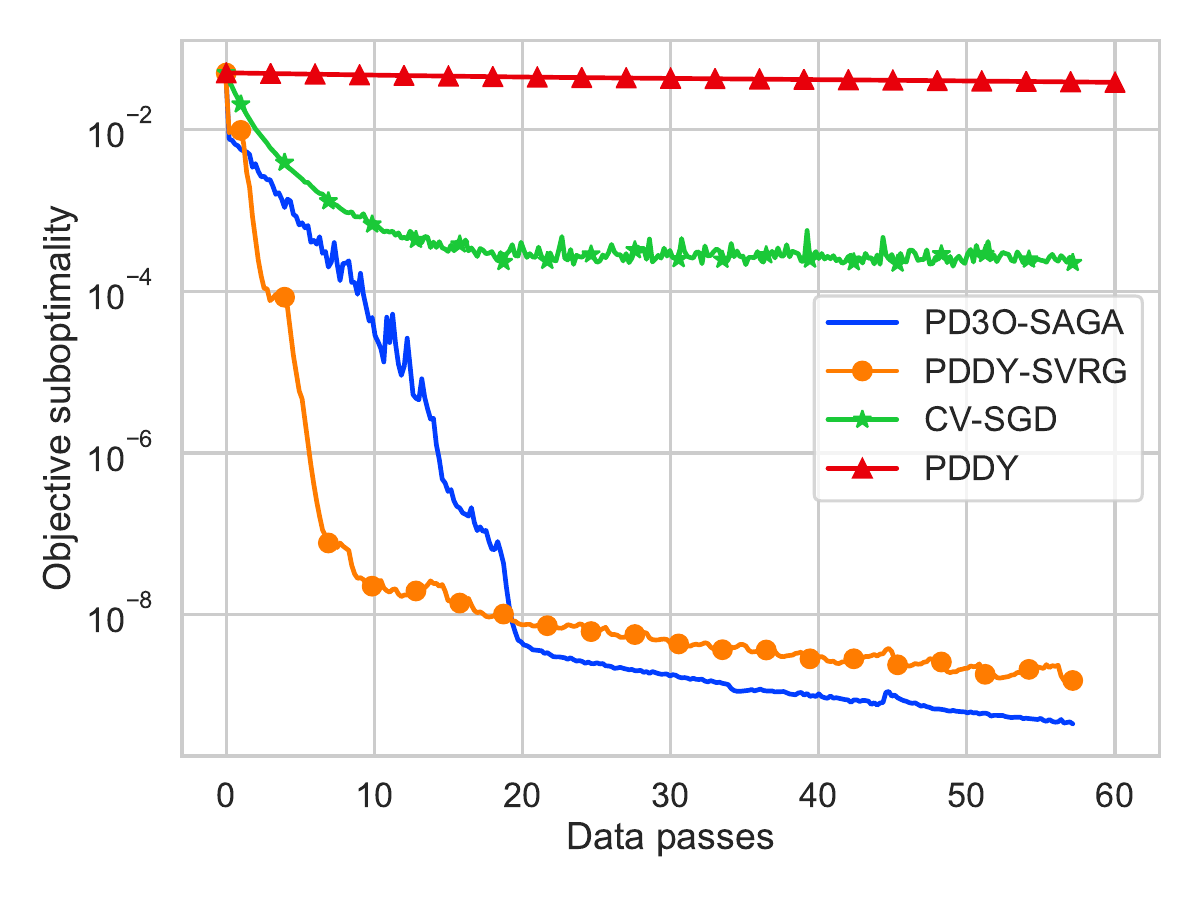}
	\includegraphics[scale=0.36]{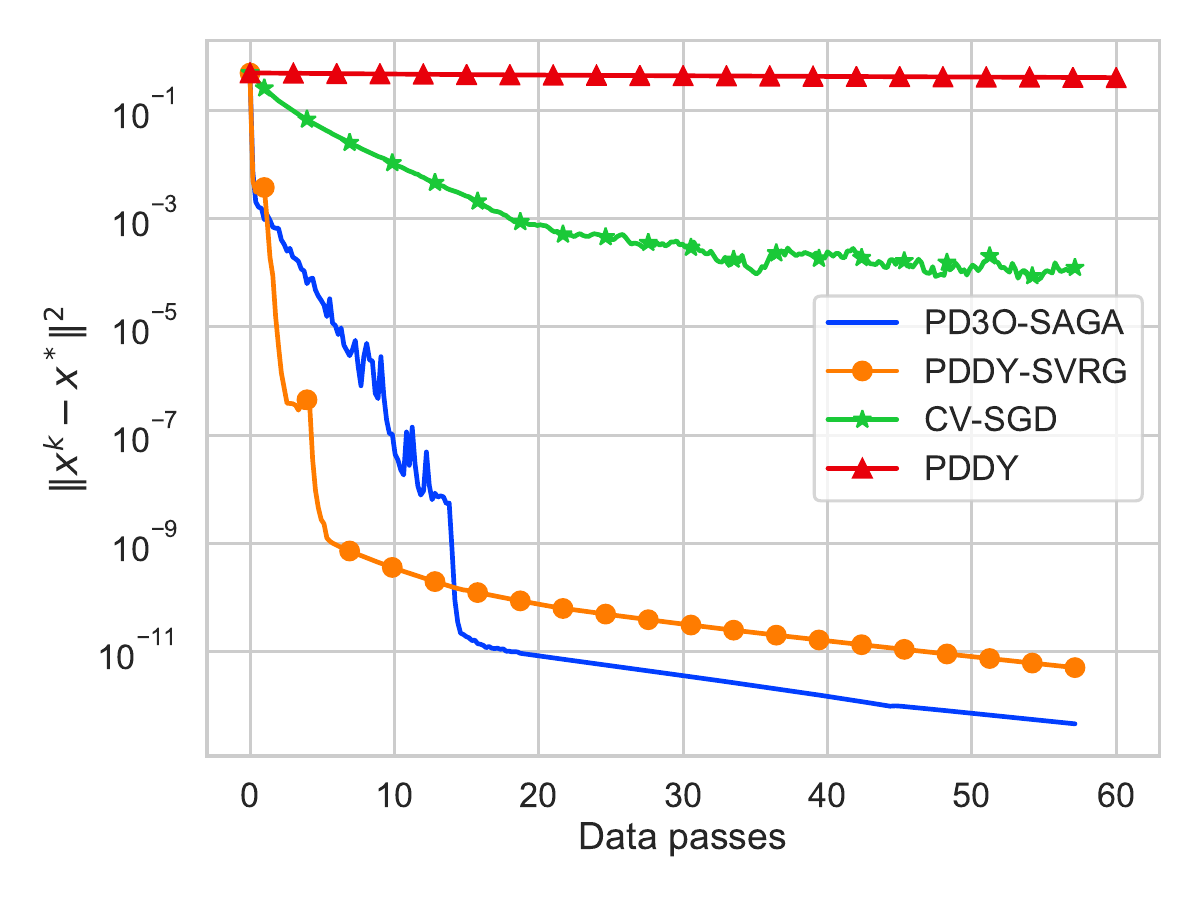}
	\caption{Results for the MNIST experiment. Left: convergence  w.r.t.\ the objective function; right: convergence in norm.}
	\label{fig2}
\end{figure}

\subsection{MNIST with Overlapping Group Lasso}

We consider the problem where $F$ is the $\ell_2$-regularized logistic loss and a group Lasso penalty. Given the data matrix $W\in \mathbb{R}^{n\times p}$ and vector of labels $a\in\{0,1\}^n$, $F(x)=\frac{1}{n}\sum_{i=1}^n f_i(x) + \frac{\lambda}{2}\|x\|^2$, 
where	$f_i(x)=-\big(a_i \log \big(h(w_i^\top x)\big) + (1-a_i)\log\big(1-h(w_i^\top x)\big)\big)$,
$\lambda=\frac{2\nu}{n}$, $w_i\in\mathbb{R}^p$ is the $i$-th row of $W$, and $h:t\to1/(1+e^{-t})$. 
The nonsmooth regularizer 
is given by
	$\lambda_1\sum_{j=1}^m \|x\|_{G_j}$, 
where $\lambda_1=\frac{\nu}{5n}$, $G_j\subset \{1,\dotsc, p\}$ is a given subset of coordinates and $\|x\|_{G_j}$ is the $\ell_2$-norm of the corresponding block of $x$. To apply splitting methods, we use $L=(I_{G_1}^\top, \dotsc, I_{G_m}^\top)^\top$, where $I_{G_j}$ is the operator that takes $x\in\mathbb{R}^p$ and returns only the entries from block $G_j$. Then, we can use $H(y)=\lambda_1\sum_{j=1}^m\|y\|_{G_j}$, which is separable in $y$ and, thus, proximable.
We use the MNIST dataset~\cite{lecun2010mnist} of 70000 black and white $28\times 28$ images. For each pixel, we add a group of pixels $G_j$ adjacent to it, including the pixel itself. Since there are some border pixels, groups consist of 3, 4 or 5 coordinates, and there are 784 penalty terms in total. The results are shown in Fig.~\ref{fig2}. Here SAGA is a bit better than SVRG.

\subsection{Fused Lasso}

In the Fused Lasso problem, we are given a feature matrix $W\in\mathbb{R}^{n\times p}$ and an output vector $a$, which define the least-squares function $F(x)=\frac{1}{2}\|Wx-a\|^2$. It is regularized with $\frac{\lambda}{2}\|x\|^2$ and $\lambda_1\|Dx\|_1$, where $\lambda=\frac{\nu}{n}$, $\lambda_1=\frac{\nu}{10n}$ and $D\in\mathbb{R}^{(p-1)\times p}$ has entries $D_{i,i}=1$, $D_{i, i+1}=-1$, for $i=1,\dotsc, p-1$, and $D_{ij}=0$ otherwise. We used again the `mushrooms' dataset. The plots look very similar to the ones in Fig.~\ref{fig1}, so we omit them.

\section{Conclusion}

We proposed a new primal--dual proximal splitting algorithm, the Primal--Dual Davis--Yin (PDDY) algorithm, to minimize a sum of three functions, one of which is composed with a linear operator. It is an alternative to the PD3O algorithm; they often perform similarly, but one or the other may be preferable for the problem at hand, depending on the implementation details. In particular, their memory requirements can be different. Furthermore, we proposed stochastic variants of both algorithms, studied their convergence rates, and showed by experiments that they can be much faster than their deterministic counterparts. We also showed that for linearly-constrained minimization of a strongly convex function, an instance of the stochastic PDDY algorithm, called LiCoSGD, converges linearly. 
We studied all algorithms within the unified framework of a stochastic generalization of Davis--Yin splitting for monotone inclusions. Our machinery opens the door to a promising class of new randomized proximal algorithms for large-scale optimization.

\appendix
\section*{Appendix}

\section{Lemmas A.1 and A.2}\label{secappa}

We state Lemma~\ref{lem:ineq:PD3O} and Lemma~\ref{lem:PDDY}, which are used in the proofs of Theorem~\ref{th:cvx:PD3O} and Theorem~\ref{th:cvx:PDDY}, respectively.

{To simplify the notations, we use the following convention: when a set appears in an equation while a single element is expected, e.g.\ $\partial R (x^k)$, this means that the equation holds for some element in this nonempty set.}

\begin{lemma}
\label{lem:ineq:PD3O}
Assume that $F$ is $\mu_F$-strongly convex, for some $\mu_F \geq 0$, and that $(g^k)_{k\in\mathbb{N}}$ satisfies Assumption~\ref{as:sto-grad}. Then the iterates of the Stochastic PD3O Algorithm satisfy
\begin{align}
    \bE_k \|v^{k+1} - v^\star\|_P^2 &+ \kappa \gamma^2\bE_k\sigma_{k+1}^2 \leq \|v^k - v^\star\|_P^2 + \kappa \gamma^2\left(1-\rho+\frac{\beta}{\kappa }\right)\sigma_k^2\notag \\
    &-2\gamma(1-\gamma(\alpha  +\kappa \delta)) D_F(x^{k},x^\star)-\gamma\mu_F\|x^{k}-x^\star\|^2\\
    &-2\gamma\ps{\partial R(x^{k}) - \partial R(x^\star),x^{k} - x^\star}-2\gamma\bE_k\ps{\partial H^*(d^{k+1}) - \partial H^*(d^\star),d^{k+1} - d^\star}\notag\\
    &-\gamma^2\bE_k\big\|P^{-1}A(u^{k+1})+P^{-1}B(z^{k})- \left(P^{-1}A(u^{\star})+P^{-1}B(z^{\star})\right)\big\|_P^2.\notag
\end{align}
\end{lemma}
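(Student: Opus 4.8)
The plan is to follow the same route as the proof of Lemma~\ref{lem:PDDY}, but with the roles of $A$ and $B$ interchanged, since the PD3O algorithm is DYS$(P^{-1}A,P^{-1}B,P^{-1}C)$. First I would apply the fundamental equality of Lemma~\ref{lem:funda-DYS} in the space $\cZ_P$ endowed with $\ps{\cdot,\cdot}_P$, taking $\tilde A = P^{-1}A$, $\tilde B = P^{-1}B$, $\tilde C = P^{-1}C$. Since $z^k = J_{\gamma P^{-1}B}(v^k)$ and $u^{k+1} = J_{\gamma P^{-1}A}(\cdots)$, the equality produces three monotonicity cross-terms (for $P^{-1}B$ at $z^k$, for $P^{-1}C$ at $z^k$, and for $P^{-1}A$ at $u^{k+1}$), the cocoercivity square $\gamma^2\|P^{-1}C(z^k)-P^{-1}C(z^\star)\|_P^2$, and the trailing square $-\gamma^2\|P^{-1}A(u^{k+1})+P^{-1}B(z^k)-(P^{-1}A(u^\star)+P^{-1}B(z^\star))\|_P^2$.

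Next I would strip the preconditioner from every cross-term using $\ps{P^{-1}w,z}_P = \ps{w,z}$, which replaces $P^{-1}A,P^{-1}B,P^{-1}C$ by $A,B,C$, and then substitute their block forms from~\eqref{eq:ABCP} with $C(z^k)=(g^{k+1},0)$. The $B$-term reduces to $\ps{\partial R(x^k)-\partial R(x^\star),x^k-x^\star}$ and the $C$-term to $\ps{g^{k+1}-\nabla F(x^\star),x^k-x^\star}$. In the $A$-term the off-diagonal blocks $L^*$ and $-L$ cancel by adjointness, leaving only $\ps{\partial H^*(d^{k+1})-\partial H^*(d^\star),d^{k+1}-d^\star}$; this adjoint cancellation is the one step that requires genuine care. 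Because the upper-left block of $P$ is the identity and $C$ acts on the primal coordinate alone, the cocoercivity square collapses to $\|g^{k+1}-\nabla F(x^\star)\|^2$. Unlike in Lemma~\ref{lem:PDDY}, I would deliberately keep the trailing square term rather than dropping it, since it is needed downstream in the linear-convergence analysis exploiting smoothness of $R$.

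Finally I would take the conditional expectation $\bE_k$ and invoke Assumption~\ref{as:sto-grad}: unbiasedness $\bE_k(g^{k+1})=\nabla F(x^k)$ turns the linear $C$-term into $\ps{\nabla F(x^k)-\nabla F(x^\star),x^k-x^\star}$, while the second-moment bound gives $\bE_k\|g^{k+1}-\nabla F(x^\star)\|^2 \leq 2\alpha D_F(x^k,x^\star)+\beta\sigma_k^2$. The only real deviation from the PDDY computation is that here I would use $\mu_F$-strong convexity rather than plain convexity, writing $\ps{\nabla F(x^k)-\nabla F(x^\star),x^k-x^\star} = D_F(x^k,x^\star)+D_F(x^\star,x^k) \geq D_F(x^k,x^\star)+\tfrac{\mu_F}{2}\|x^k-x^\star\|^2$ to create the $-\gamma\mu_F\|x^k-x^\star\|^2$ term. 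Adding $\kappa\gamma^2\bE_k\sigma_{k+1}^2$ to both sides and using the third part of Assumption~\ref{as:sto-grad}, $\bE_k(\sigma_{k+1}^2)\leq(1-\rho)\sigma_k^2+2\delta D_F(x^k,x^\star)$, yields the coefficient $\big(1-\rho+\tfrac{\beta}{\kappa}\big)\sigma_k^2$ and merges all Bregman contributions into $-2\gamma(1-\gamma(\alpha+\kappa\delta))D_F(x^k,x^\star)$, which is exactly the claimed inequality. The remaining manipulations are routine; the main obstacles are the adjoint cancellation in the $A$-term and the decision to retain the trailing square.
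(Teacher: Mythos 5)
Your proposal is correct and follows essentially the same route as the paper's own proof: apply Lemma~\ref{lem:funda-DYS} to DYS$(P^{-1}A,P^{-1}B,P^{-1}C)$ in $\cZ_P$, cancel the preconditioner in the cross-terms, exploit the skew-adjoint cancellation in the $A$-term and the identity upper-left block of $P$ for the cocoercivity square, retain the trailing square, and then combine conditional expectation, Assumption~\ref{as:sto-grad}, and $\mu_F$-strong convexity of $F$ exactly as the paper does. No gaps.
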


\begin{proof}
Applying Lemma~\ref{lem:funda-DYS} for $\text{DYS}(P^{-1}A,P^{-1}B,P^{-1}C)$ using the norm induced by $P$, we have
\begin{align*}
    \|v^{k+1} - v^\star\|_P^2 ={}& \|v^k - v^\star\|_P^2 -2\gamma\ps{P^{-1}B(z^{k}) - P^{-1}B(z^\star),z^{k} - z^\star}_P\\
    &-2\gamma\ps{P^{-1}C(z^{k}) - P^{-1}C(z^\star),z^{k} - z^\star}_P+\gamma^2\|P^{-1}C(z^{k}) - P^{-1}C(z^\star)\|_P^2\\
    &-2\gamma\ps{P^{-1}A(u^{k+1}) - P^{-1}A(u^\star),u^{k+1} - u^\star}_P\\
    &-\gamma^2\|P^{-1}A(u^{k+1})+P^{-1}B(z^{k}) - \left(P^{-1}A(u^{\star})+P^{-1}B(z^{\star})\right)\|_P^2\\
    {}={}& \|v^k - v^\star\|_P^2 -2\gamma\ps{B(z^{k}) - B(z^\star),z^{k} - z^\star}+\gamma^2\|P^{-1}C(z^{k}) - P^{-1}C(z^\star)\|_P^2\\
    &-2\gamma\ps{C(z^{k}) - C(z^\star),z^{k} - z^\star}-2\gamma\ps{A(u^{k+1}) - A(u^\star),u^{k+1} - u^\star}\\
    &-\gamma^2\|P^{-1}A(u^{k+1})+P^{-1}B(z^{k}) - \left(P^{-1}A(u^{\star})+P^{-1}B(z^{\star})\right)\|_P^2.
\end{align*}
Using $
    A(u^{k+1}) =  \big(  L^* d^{k+1} , -L s^{k+1}+\partial H^*(d^{k+1})\big),
    B(z^k) =  \big(\partial R(x^{k}) ,0 \big),
    C(z^k) = \big( g^{k+1},0\big)
$
and
$
    A(u^{\star}) =  \big(  L^* d^\star , -L s^\star+\partial H^*(d^\star)\big),
    B(z^\star) =  \big(\partial R(x^\star) ,0 \big),
    C(z^\star) = \big(  \nabla F(x^\star),0\big)
$, 
we have
\begin{align*}
    \|v^{k+1} - v^\star\|_P^2 ={}& \|v^k - v^\star\|_P^2 -2\gamma\ps{\partial R(x^{k}) - \partial R(x^\star),x^{k} - x^\star}+\gamma^2\|g^{k+1} - \nabla F(x^\star)\|^2\\
    &-2\gamma\ps{g^{k+1} - \nabla F(x^\star),x^{k} - x^\star}-2\gamma\ps{\partial H^*(d^{k+1}) - \partial H^*(d^\star),d^{k+1} - d^\star}\\
    &-\gamma^2\|P^{-1}A(u^{k+1})+P^{-1}B(z^{k}) - \left(P^{-1}A(u^{\star})+P^{-1}B(z^{\star})\right)\|_P^2.\notag
\end{align*}
Taking conditional expectation w.r.t.\ $\cF_k$ and using Assumption~\ref{as:sto-grad},
\begin{align*}
    \bE_k \|v^{k+1} - v^\star\|_P^2 \leq{}& \|v^k - v^\star\|_P^2 -2\gamma\ps{\partial R(x^{k}) - \partial R(x^\star),x^{k} - x^\star}\notag\\
    &-2\gamma\ps{\nabla F(x^{k}) - \nabla F(x^\star),x^{k} - x^\star}\notag\\
    &-2\gamma\bE_k\ps{\partial H^*(d^{k+1}) - \partial H^*(d^\star),d^{k+1} - d^\star}+\gamma^2 \left(2\alpha   D_F(x^{k},x^\star) + \beta\sigma_k^2\right)\notag\\
    &-\gamma^2\bE_k\|P^{-1}A(u^{k+1})+P^{-1}B(z^{k}) - \left(P^{-1}A(u^{\star})+P^{-1}B(z^{\star})\right)\|_P^2.\notag
\end{align*}
Using strong convexity of $F$,
\begin{align*}
    \bE_k \|v^{k+1} - v^\star\|_P^2 \leq{}& \|v^k - v^\star\|_P^2 -\gamma\mu_F\|x^{k}-x^\star\|^2-2\gamma D_F(x^{k},x^\star)\notag\\
    &+\gamma^2 \left(2\alpha  D_F(x^{k},x^\star) + \beta\sigma_k^2\right)-2\gamma\ps{\partial R(x^{k}) - \partial R(x^\star),x^{k} - x^\star}\notag\\
    &-2\gamma\bE_k\ps{\partial H^*(d^{k+1}) - \partial H^*(d^\star),d^{k+1} - d^\star}\notag\\
    &-\gamma^2\bE_k\|P^{-1}A(u^{k+1})+P^{-1}B(z^{k}) - \left(P^{-1}A(u^{\star})+P^{-1}B(z^{\star})\right)\|_P^2.\notag
\end{align*}
Using Assumption~\ref{as:sto-grad},
\begin{align*}
    \bE_k \|v^{k+1} - v^\star\|_P^2 &+ \kappa \gamma^2\bE_k\sigma_{k+1}^2 \leq \|v^k - v^\star\|_P^2 + \kappa \gamma^2\left(1-\rho+\frac{\beta}{\kappa }\right)\sigma_k^2-\gamma\mu_F\|x^{k}-x^\star\|^2\notag\\
    &-2\gamma(1-\gamma(\alpha  +\kappa \delta)) D_F(x^{k},x^\star)-2\gamma\ps{\partial R(x^{k}) - \partial R(x^\star),x^{k} - x^\star}\\
    &-2\gamma\bE_k\ps{\partial H^*(d^{k+1}) - \partial H^*(d^\star),d^{k+1} - d^\star}\notag\\
    &-\gamma^2\bE_k\big\|P^{-1}A(u^{k+1})+P^{-1}B(z^{k}) - \left(P^{-1}A(u^{\star})+P^{-1}B(z^{\star})\right)\big\|_P^2.
\end{align*}
\end{proof}
\begin{lemma}
\label{lem:PDDY}
Suppose that $(g^k)_{k\in\mathbb{N}}$ satisfies Assumption~\ref{as:sto-grad}. Then the iterates of the Stochastic PDDY Algorithm satisfy
\begin{align*}
    \bE_k \|v^{k+1} - v^\star\|_P^2 &+ \kappa \gamma^2\bE_k\sigma_{k+1}^2 \leq \|v^k - v^\star\|_P^2 + \kappa \gamma^2\left(1-\rho+\frac{\beta}{\kappa }\right)\sigma_k^2\notag\\
    &-2\gamma(1-\gamma(\alpha  +\kappa \delta)) D_F(x^{k},x^\star)-2\gamma\ps{\partial H^*(y^{k}) - \partial H^*(y^\star),y^{k} - y^\star}\\
    &-2\gamma\bE_k\ps{\partial R(s^{k+1}) - \partial R(s^\star),s^{k+1} - s^\star}.\notag
\end{align*}
\end{lemma}
\begin{proof}
Applying Lemma~\ref{lem:funda-DYS} for $\text{DYS}(P^{-1}B,P^{-1}A,P^{-1}C)$ using the norm induced by $P$, we have
\begin{align*}
    \|v^{k+1} - v^\star\|_P^2 ={}& \|v^k - v^\star\|_P^2 -2\gamma\ps{P^{-1}A(z^{k}) - P^{-1}A(z^\star),z^{k} - z^\star}_P\\
    &-2\gamma\ps{P^{-1}C(z^{k}) - P^{-1}C(z^\star),z^{k} - z^\star}_P+\gamma^2\|P^{-1}C(z^{k}) - P^{-1}C(z^\star)\|_P^2\\
    &-2\gamma\ps{P^{-1}B(u^{k+1}) - P^{-1}B(u^\star),u^{k+1} - u^\star}_P\\
    &-\gamma^2\|P^{-1}B(u^{k+1})+P^{-1}A(z^{k}) - \left(P^{-1}B(u^{\star})+P^{-1}A(z^{\star})\right)\|_P^2\\
    ={}& \|v^k - v^\star\|_P^2-2\gamma\ps{A(z^{k}) - A(z^\star),z^{k} - z^\star}-2\gamma\ps{C(z^{k}) - C(z^\star),z^{k} - z^\star}\\
    &-2\gamma\ps{B(u^{k+1}) - B(u^\star),u^{k+1} - u^\star}+\gamma^2\|P^{-1}C(z^{k}) - P^{-1}C(z^\star)\|_P^2\\
    &-\gamma^2\|P^{-1}B(u^{k+1})+P^{-1}A(z^{k}) - \left(P^{-1}B(u^{\star})+P^{-1}A(z^{\star})\right)\|_P^2.
\end{align*}
Using $
    A(z^{k}) =  \big(  L^* y^k ,-L x^k+\partial H^*(y^k)\big),
    B(u^{k+1}) =  \big(\partial R(s^{k+1}) ,0 \big),
    C(z^k) = \big( g^{k+1},0\big)
$
and
$
    A(z^{\star}) =  \big(  L^* y^\star , -L x^\star+\partial H^*(y^\star)\big),
    B(u^\star) =  \big(\partial R(s^\star) ,0 \big),
    C(z^\star) = \big(  \nabla F(x^\star),0\big)
$, 
we have,
\begin{align*}
    \|v^{k+1} - v^\star\|_P^2 \leq{}& \|v^k - v^\star\|_P^2-2\gamma\ps{\partial H^*(y^{k}) - \partial H^*(y^\star),y^{k} - y^\star}+\gamma^2\|g^{k+1} - \nabla F(x^\star)\|^2\\
    &-2\gamma\ps{g^{k+1} - \nabla F(x^\star),x^{k} - x^\star}-2\gamma\ps{\partial R(s^{k+1}) - \partial R(s^\star),s^{k+1} - s^\star}.
\end{align*}
Applying the conditional expectation w.r.t. $\cF_k$ and using Assumption~\ref{as:sto-grad},
\begin{align*}
    \bE_k \|v^{k+1} - v^\star\|_P^2 \leq{}& \|v^k - v^\star\|_P^2-2\gamma\ps{\partial H^*(y^{k}) - \partial H^*(y^\star),y^{k} - y^\star}\notag\\
    &-2\gamma\ps{\nabla F(x^{k}) - \nabla F(x^\star),x^{k} - x^\star}+\gamma^2 \left(2\alpha   D_F(x^{k},x^\star) + \beta\sigma_k^2\right)\\
    &-2\gamma\bE_k\ps{\partial R(s^{k+1}) - \partial R(s^\star),s^{k+1} - s^\star}.\notag\end{align*}
Using the convexity of $F$,
\begin{align*}
    \bE_k \|v^{k+1} - v^\star\|_P^2 \leq{}& \|v^k - v^\star\|_P^2-2\gamma\ps{\partial H^*(y^{k}) - \partial H^*(y^\star),y^{k} - y^\star}-2\gamma D_F(x^{k},x^\star)\\
    &-2\gamma\bE_k\ps{\partial R(s^{k+1}) - \partial R(s^\star),s^{k+1} - s^\star}+\gamma^2 \left(2\alpha   D_F(x^{k},x^\star) + \beta\sigma_k^2\right).\notag
\end{align*}
Using Assumption~\ref{as:sto-grad},
\begin{align*}
    \bE_k \|v^{k+1} - v^\star\|_P^2 &+ \kappa \gamma^2\bE_k\sigma_{k+1}^2 \leq \|v^k - v^\star\|_P^2 + \kappa \gamma^2\left(1-\rho+\frac{\beta}{\kappa }\right)\sigma_k^2\notag\\
    &-2\gamma(1-\gamma(\alpha  +\kappa \delta)) D_F(x^{k},x^\star)-2\gamma\ps{\partial H^*(y^{k}) - \partial H^*(y^\star),y^{k} - y^\star}\\
    &-2\gamma\bE_k\ps{\partial R(s^{k+1}) - \partial R(s^\star),s^{k+1} - s^\star}.
\end{align*}\end{proof}

\section{Linear Convergence Results}\label{secappb}
In this section, we provide linear convergence results for the stochastic PD3O and the stochastic PDDY algorithms, in addition to Theorem \ref{th:LV0}. 
For an operator splitting method like DYS$(\tilde{A},\tilde{B},\tilde{C})$ to converge linearly, it is necessary that $\tilde{A}+\tilde{B}+\tilde{C}$ is strongly monotone. But this is not sufficient, and in general, 
 to converge linearly, DYS$(\tilde{A},\tilde{B},\tilde{C})$ requires the stronger assumption that $\tilde{A}$ or $\tilde{B}$ or $\tilde{C}$ is strongly monotone,  
 and in addition that $\tilde{A}$ or $\tilde{B}$ is cocoercive~\cite{dav17}. 
The PDDY algorithm is equivalent to 
DYS$(P^{-1}B,P^{-1}A,P^{-1}C)$ and the PD3O algorithm is equivalent to DYS$(P^{-1}A,P^{-1}B,P^{-1}C)$, see Sect.~\ref{sec:pdalgos}. However, $P^{-1}A$, $P^{-1}B$ and $P^{-1}C$ are not strongly monotone. In spite of this, we will prove  linear convergence of the (stochastic) PDDY and PD3O algorithms. 

Thus, for  both algorithms, we will make the assumption that $P^{-1}A+P^{-1}B+P^{-1}C$ is strongly monotone. This is equivalent to assuming that $M = A+B+C$ is strongly monotone; that is, that $F+R$ is strongly convex and $H$ is smooth. For instance, the Chambolle--Pock algorithm~\cite{cha11a,cha162}, which is a particular case of the PD3O and the PDDY algorithms, requires $R$ strongly convex and $H$ smooth to converge linearly, in general. 
In fact, for primal--dual algorithms to converge linearly on Problem \eqref{eq:original-pb}, for any $L$, it seems unavoidable that 
$F+R$ is strongly convex and that the dual term $H^*$ is strongly convex too, because the algorithm needs to be contractive in both the primal and the dual spaces. This means that $H$ must be smooth. We can remark  that if $H$ is smooth, it is tempting to use its gradient instead of its proximity operator. We can then use the proximal gradient algorithm to solve Problem \eqref{eq:original-pb} with $\nabla (F+H\circ L)(x)=\nabla F(x) + L^*\nabla H (Lx)$. However, in practice, it is often faster to use the proximity operator instead of the gradient, see a recent analysis of this topic in \cite{com19}.

For the PD3O algorithm, we will add a cocoercivity assumption, as suggested by the general linear convergence theory of DYS. More precisely, we will assume that $R$ is smooth, so that $P^{-1}B$ is cocoercive. Our result on the PD3O is therefore an extension of~\cite[Theorem 3]{yan18} to the stochastic setting. For the PDDY algorithm, this assumption is not needed to prove linear convergence, which is an advantage over the PD3O algorithm.

We denote by $\|\cdot\|_{\gamma,\tau}$ the norm induced by $\frac{\gamma}{\tau}I - \gamma^2 L L^*$ on $\cY$.

\begin{theorem}[Linear convergence of the Stochastic PD3O Algorithm]
    \label{th:Hsmooth:PD3O}
    Suppose that Assumption~\ref{as:sto-grad} holds. Suppose that $H$ is $1/\mu_{H^*}$-smooth, for some $\mu_{H^*} >0$, $F$ is $\mu_F$-strongly convex, for some $\mu_F\geq 0$,  and $R$ is $\mu_R$-strongly convex, for some $\mu_R\geq 0$, with $\mu \eqdef \mu_F + 2\mu_R >0$. Also, suppose that $R$ is $\lambda$-smooth, for some $\lambda>0$. Suppose that the parameters $\gamma>0$ and $\tau >0$ 
    satisfy $\gamma \leq 1/(\alpha+\kappa\delta)$, for some $\kappa > \beta/\rho$, 
   and  $\gamma\tau\|L\|^2 < 1$. Define, for every $k\in\mathbb{N}$,
    \begin{equation}
        \label{eq:lyapunov-smooth}
        V^k \eqdef \|p^{k} - p^\star\|^2+ \left(1+2\tau\mu_{H^*}\right)\|y^{k} - y^\star\|_{\gamma,\tau}^2 + \kappa\gamma^2 \sigma_{k}^2,
    \end{equation}
    and
    \begin{equation}
        \label{eq:rate-smooth}
        r \eqdef \max\left(1-\frac{\gamma\mu}{(1+\gamma \lambda)^2},\left(1-\rho+\frac{\beta}{\kappa}\right),\frac{1}{1+2\tau\mu_{H^*}}\right).
    \end{equation}
    Then, for every $k\in\mathbb{N}$,
   $
        \bE V^{k} \leq r^k V^0
    $.
\end{theorem}
\begin{proof}
We first use Lemma~\ref{lem:ineq:PD3O} along with the strong convexity of $R,H^*$. Note that $y^{k} = q^k$ and therefore $q^{k+1} = q^k + d^{k+1} - q^{k} = d^{k+1}$. We have
\begin{align*}
    &\bE_k \|p^{k+1} - p^\star\|^2 + \bE_k \|q^{k+1} - q^\star\|_{\gamma,\tau}^2 + 2\gamma\mu_{H^*}\bE_k\|q^{k+1} - q^\star\|^2 + \kappa \gamma^2\bE_k\sigma_{k+1}^2 \\
    &\quad\leq \|p^{k} - p^\star\|^2 + \|q^{k} - q^\star\|_{\gamma,\tau}^2 -\gamma\mu\|x^{k}-x^\star\|^2 + \kappa \gamma^2\left(1-\rho+\frac{\beta}{\kappa }\right)\sigma_k^2\\
    &\quad\quad-2\gamma(1-\gamma(\alpha  +\kappa \delta)) D_F(x^{k},x^\star).
\end{align*}
Noting that for every $q \in \mathcal{Y}$, $\|q\|_{\gamma,\tau}^2 = \frac{\gamma}{\tau}\|q\|^2 - \gamma^2\|L^* q\|^2 \leq \frac{\gamma}{\tau}\|q\|^2$, and taking $\gamma \leq 1/(\alpha  +\kappa \delta)$, we have
\begin{align}
    &\bE_k \|p^{k+1} - p^\star\|^2+ \left(1+2\tau\mu_{H^*}\right)\bE_k\|q^{k+1} - q^\star\|_{\gamma,\tau}^2 + \kappa \gamma^2\bE_k\sigma_{k+1}^2\notag\\
    &\quad\leq \|p^{k} - p^\star\|^2 + \|q^{k} - q^\star\|_{\gamma,\tau}^2 -\gamma\mu\|x^{k}-x^\star\|^2 + \kappa \gamma^2\left(1-\rho+\frac{\beta}{\kappa }\right)\sigma_k^2.\notag
\end{align}
Finally, since $R$ is $\lambda$-smooth, $\|p^k - p^\star\|^2 \leq (1+2 \gamma \lambda + \gamma^2 \lambda^2)\|x^{k}-x^\star\|^2$. Indeed, in this case, applying Lemma~\ref{lem:funda-DYS} with $\tilde{A} =0$, $\tilde{C} = 0$ and $\tilde{B} = \nabla R$, we obtain that if $x^k = \prox_{\gamma R}(p^k)$ and $x^\star = \prox_{\gamma R}(p^\star)$, then
\begin{align*}
\|x^k - x^\star\|^2 ={}& \|p^k - p^\star\|^2-2\gamma\ps{\nabla R(x^k) - \nabla R(x^\star),x^k - x^\star} - \gamma^2\|\nabla R(x^k) - \nabla R(x^\star)\|^2\\
\geq{}& \|p^k - p^\star\|^2-2\gamma\lambda \|x^k - x^\star\|^2 - \gamma^2\lambda^2\|x^{k} - x^\star\|^2.\notag
\end{align*}
Hence,
\begin{align*}
    &\bE_k \|p^{k+1} - p^\star\|^2+ \left(1+2\tau\mu_{H^*}\right)\bE_k\|q^{k+1} - q^\star\|_{\gamma,\tau}^2 + \kappa \gamma^2\bE_k\sigma_{k+1}^2 \\
    &\quad\leq \|p^{k} - p^\star\|^2 + \|q^{k} - q^\star\|_{\gamma,\tau}^2 -\frac{\gamma\mu}{(1+\gamma \lambda)^2}\|p^{k}-p^\star\|^2+ \kappa \gamma^2\left(1-\rho+\frac{\beta}{\kappa }\right)\sigma_k^2.\notag
\end{align*}
Thus, by setting $V^k$ as in  \eqref{eq:lyapunov-smooth} and $r$ as in \eqref{eq:rate-smooth}, we have $
    \bE_k V^{k+1} \leq r V^k
$.
\end{proof}

Thus, under smoothness and strong convexity assumptions, Theorem~\ref{th:Hsmooth:PD3O} implies  linear convergence of the dual variable $y^k$ to $y^\star$, with convergence rate given by $r$.
Since $\|x^k - x^\star\| \leq \|p^k - p^\star\|$, it also implies linear convergence of the variable $x^k$ to $x^\star$, with same  rate.

If $g^{k+1} = \nabla F(x^k)$, the Stochastic PD3O Algorithm reverts to the PD3O Algorithm and Theorem~\ref{th:Hsmooth:PD3O} provides a convergence rate similar to Theorem 3 in~\cite{yan18}. In this case, by taking $\kappa = 1$, we obtain
\begin{equation*}
  r = \max\left(1-\gamma\frac{\mu_F + 2\mu_R}{(1+\gamma \lambda)^2},\frac{1}{1+2\tau\mu_{H^*}}\right),
\end{equation*}
whereas Theorem 3 in~\cite{yan18} provides the rate
\begin{equation*}
\max\left(1-\gamma\frac{2(\mu_F+\mu_R) - \gamma\alpha\mu_F}{(1+\gamma \lambda)^2},\frac{1}{1+2\tau\mu_{H^*}}\right)
\end{equation*}
(the reader might not recognize the rate given in Theorem 3 of~\cite{yan18} because of some typos in Eqn.~39 of~\cite{yan18}).

\begin{theorem}[Linear convergence of the Stochastic PDDY Algorithm]
    \label{th:Hsmooth:PDDY}
    Suppose that Assumption~\ref{as:sto-grad} holds. Also, suppose that
     $H$ is $1/\mu_{H^*}$-smooth and $R$ is $\mu_R$-strongly convex, for some $\mu_R >0$ and $\mu_{H^*} >0$.
     Suppose that the parameters $\gamma>0$ and $\tau >0$ 
    satisfy $\gamma \leq 1/(\alpha+\kappa\delta)$, for some $\kappa > \beta/\rho$, 
   $\gamma\tau\|L\|^2 < 1$, and $\gamma^2 \leq \frac{\mu_{H^*}}{\|L\|^2 \mu_R}$.
   Define $\eta \eqdef 2\left(\mu_{H^*} -\gamma^2\|L\|^2\mu_R\right) \geq 0$ and, for every $k\in\mathbb{N}$, 
    \begin{equation}\label{eqV00}
        V^k \eqdef (1+\gamma\mu_R)\|p^{k} - p^\star\|^2 + (1+\tau\eta)\|y^{k} - y^\star\|_{\gamma,\tau}^2 + \kappa \gamma^2\sigma_{k}^2,
    \end{equation}
     and
     \begin{equation}\label{eqr00}
        r \eqdef \max\left(\frac{1}{1+\gamma\mu_R},1-\rho+\frac{\beta}{\kappa},\frac{1}{1+\tau\eta}\right)
    \end{equation}
    Then,  for every $k\in\mathbb{N}$,
    $
     \bE V^{k} \leq r^k V^0
    $.
\end{theorem}
\begin{proof}
We first use Lemma~\ref{lem:PDDY} along with the strong convexity of $R$ and $H^*$. Note that $y^{k} = q^{k+1}$. We have
\begin{align*}
    \bE_k \|v^{k+1} - v^\star\|_P^2 + \kappa \gamma^2\bE_k\sigma_{k+1}^2 \leq{}& \|v^k - v^\star\|_P^2 + \kappa \gamma^2\left(1-\rho+\frac{\beta}{\kappa }\right)\sigma_k^2\notag\\
    &-2\gamma\mu_{H^*}\bE_k \|q^{k+1} - q^\star\|^2 -2\gamma\mu_{R}\bE_k\|s^{k+1} - s^\star\|^2.
\end{align*}
Note that $s^{k+1} = p^{k+1} - \gamma L^* y^k$. Therefore, $s^{k+1} - s^\star = (p^{k+1} - p^\star) - \gamma L^*  (y^k - y^\star)$. Using Young's inequality $-\|a+b\|^2 \leq -\frac{1}{2}\|a\|^2 + \|b\|^2$, we have
$
-\bE_k\|s^{k+1} - s^\star\|^2 \leq -\frac{1}{2}\bE_k\|p^{k+1} - p^\star\|^2 + \gamma^2\|L\|^2 \bE_k\|q^{k+1} - q^\star\|^2
$. Hence, using $\tau \|q\|_{\gamma,\tau}^2 \leq \gamma \|q\|^2$,
\begin{align*}
    \bE_k \|v^{k+1} - v^\star\|_P^2 + \kappa \gamma^2\bE_k\sigma_{k+1}^2
    \leq{}& \|v^k - v^\star\|_P^2 + \kappa \gamma^2\left(1-\rho+\frac{\beta}{\kappa }\right)\sigma_k^2\\
    &-2\gamma\left(\mu_{H^*} -\gamma^2\|L\|^2\mu_R\right)\bE_k \|q^{k+1}- q^\star\|^2 \\
    &- \gamma\mu_{R}\bE_k\|p^{k+1} - p^\star\|^2\notag\\
    \leq{}& \|v^k - v^\star\|_P^2 + \kappa \gamma^2\left(1-\rho+\frac{\beta}{\kappa }\right)\sigma_k^2\\
    & -2\tau\bE_k \|q^{k+1}- q^\star\|_{\gamma,\tau}^2\left(\mu_{H^*} -\gamma^2\|L\|^2\mu_R\right) \\
    &- \gamma\mu_{R}\bE_k\|p^{k+1} - p^\star\|^2.\notag
\end{align*}
Set $\eta \eqdef 2\left(\mu_{H^*} -\gamma^2\|L\|^2\mu_R\right) \geq 0$. Then
\begin{align*}
    &(1+\gamma\mu_R)\bE_k \|p^{k+1} - p^\star\|^2 + (1+\tau\eta)\bE_k \|q^{k+1} - q^\star\|_{\gamma,\tau}^2 + \kappa \gamma^2\bE_k\sigma_{k+1}^2\notag\\
     &\quad\leq \|v^k - v^\star\|_P^2 + \kappa \gamma^2\left(1-\rho+\frac{\beta}{\kappa }\right)\sigma_k^2.
\end{align*}
Thus, by setting $V^k$ as in  \eqref{eqV00} and $r$ as in \eqref{eqr00}, we have $ 
    \bE_k V^{k+1} \leq r V^k.
$ 
\end{proof}

\section{PriLiCoSGD and Application to Decentralized Optimization}\label{apppridec}

   \begin{figure*}[t]
\begin{minipage}{.48\textwidth}\begin{algorithm}[H]
 \caption*{\textbf{PriLiCoSGD} (new)}
 \begin{algorithmic}[1]
    \STATE \textbf{Input:} $x^0\in \mathcal{X}$, $a^0\in \ran(W)$, $\gamma>0$, $\tau>0$
 	\FOR{$k = 0,1,2,\dots$}
	  \STATE $t^{k+1}=x^k - \gamma g^{k+1}$
	  \STATE $a^{k+1} = a^k + \tau W (t^{k+1}-\gamma a^k)-\tau c$
	  \STATE $x^{k+1} = t^{k+1}-\gamma a^{k+1}$
  	\ENDFOR
 \end{algorithmic}
\end{algorithm}\end{minipage}\ \ \ \ \   \begin{minipage}{.48\textwidth}
\begin{algorithm}[H]
    \caption*{
    \textbf{DESTROY} (new)}
 \begin{algorithmic}[1]
    \STATE \textbf{Input:} $x_i^0 \in \mathcal{X}$ and $a_i^0 \in \mathcal{X}$,   $\forall i \in V$, such that $\sum_{i\in V}a_i^0=0$, $\gamma>0$, $\tau>0$
 	\FOR{$k = 0,1,2,\dots$}
	\FOR{all $i \in V$ in parallel}
      \STATE $t_i^{k+1}=x_i^k - \gamma g_i^{k+1}$
	   \STATE $a_i^{k+1} = (1-\tau\gamma\widehat{W}_{i,i}) a_i^k +\tau\widehat{W}_{i,i} t_i^{k+1}$
	    \STATE ${}+ \tau \sum_{j\neq i:\{i,j\}\in V}\widehat{W}_{i,j} (t_j^{k+1}-\gamma a_j^k)$
	  \STATE $x_i^{k+1} = t_i^{k+1}-\gamma a_i^{k+1}$.
	  \ENDFOR
  		\ENDFOR
 \end{algorithmic}\end{algorithm}\end{minipage}\end{figure*}
 
 {In decentralized optimization, a network of computing agents aims at jointly minimizing an objective function by performing local computations and exchanging information along the edges~\cite{shi15,sca17,kov20,alg21}. It 
  is a particular case of linearly-constrained optimization, as detailed below. } 

First, let us set $W\eqdef L^*L$ and $c \eqdef L^*b$. Replacing the variable $y^k$ by the variable $a^k
\eqdef L^*y^k$ in LiCoSGD, we can write the algorithm using $W$ and $c$ instead of $L$, $L^*$ and $b$, with primal variables in $\mathcal{X}$ only. This yields the new algorithm PriLiCoSGD, shown above, to minimize $F(x)$ subject to $Wx=c$. The convergence results for LiCoSGD apply to PriLiCoSGD, with $(a^k)_{k\in\mathbb{N}}$ converging to $a^\star=-\nabla F(x^\star)$.

We can apply  PriLiCoSGD to decentralized optimization as follows.
Consider a connected undirected graph $G = (V,E)$, where $V = \{1,\ldots,N\}$ is the set of nodes and $E$ the set of edges. Consider a family $(f_i)_{i \in V}$ of $\mu$-strongly convex and $\nu$-smooth functions $f_i$, for some $\mu\geq 0$ and $\nu>0$. The problem is:
\begin{equation}
    \label{eq:decentralized}
    \min_{x \in \cX} \,\sum_{i \in V} f_i(x).
\end{equation}
Consider a gossip matrix of the graph $G$; that is, a $N \times N$ symmetric positive semidefinite matrix $\widehat{W} = (\widehat{W}_{i,j})_{i,j \in V}$, such that $\ker(\widehat{W}) = \Span([1\ \cdots\ 1]^\mathrm{T})$ and $\widehat{W}_{i,j} \neq 0$ if and only if $i=j$ or $\{i,j\} \in E$ is an edge of the graph. $\widehat{W}$ can be the Laplacian matrix of $G$, for instance.
Set $W \eqdef \widehat{W} \otimes I$, where $\otimes$ is the Kronecker product; 
then decentralized communication in the network $G$ is modeled by an application of the positive self-adjoint linear operator $W$ on $\cX^V$. Moreover, $W(x_1,\ldots,x_N) = 0$ if and only if $x_1 = \ldots = x_N$. 
Therefore, Problem~\eqref{eq:decentralized} is equivalent to the lifted problem
\begin{equation}
    \label{eq:dec2}
    \min_{\tilde{x} \in \cX^V} F(\tilde{x}) \quad \text{such that} \quad W \tilde{x} = 0, 
\end{equation}
where for every $\tilde{x}=(x_1,\ldots,x_N) \in \cX^V$, $F(\tilde{x}) = \sum_{i=1}^N f_i(x_i)$. Let us apply PriLiCoSGD to Problem~\eqref{eq:dec2}; we obtain the Decentralized Stochastic Optimization Algorithm (DESTROY).
 It 
generates the sequence  $(\tilde{x}^k)_{k\in\mathbb{N}}$, where $\tilde{x}^k = (x_1^k,\ldots,x_N^k) \in \cX^V$. The update of each $x_i^k$ consists in evaluating $g_i^{k+1}$, an estimate of $\nabla f_i(x_i^k)$ satisfying Assumption~\ref{as:sto-grad}, 
 and communication steps involving $x_j^k$, for every neighbor $j$ of $i$. 
For instance, the variance-reduced estimator $g_i^k$ can be the loopless SVRG estimator seen in Proposition~\ref{proplsvrg}, 
when $f_i$ is itself a  sum of functions, or a compressed version of $\nabla f_i$ \cite{sat20,bas20,li20a,xu21}.

As an application of the convergence results for LiCoSGD, we obtain the following results for DESTROY.
   Theorem \ref{th:pddy-cv} becomes:
    \begin{theorem}[Convergence of DESTROY, deterministic case $g_i^{k+1}=\nabla f_i(x_i^k)$]
    \label{th:destroy-cv}
    Suppose that $\gamma\in (0,2/\nu)$ and that $\tau\gamma\|\widehat{W}\|< 1$. Then in DESTROY,    each $(x_i^k)_{k\in\mathbb{N}}$ converges to the same
    solution $x^\star$ to the problem \eqref{eq:decentralized} and each $(a_i^k)_{k\in\mathbb{N}}$ converges to $a_i^\star=-\nabla f_i(x^\star)$.
\end{theorem}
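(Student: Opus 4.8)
The plan is to identify DESTROY as the instance of PriLiCoSGD obtained by running LiCoSGD on the lifted constrained problem~\eqref{eq:dec2}, and then to inherit convergence directly from Theorem~\ref{th:lico-cv}, adding only the interpretation of the limit. Concretely, I would take $\mathcal{X}^V$ as the primal space, $L \eqdef W^{1/2}$, $b \eqdef 0$, and $F(\tilde{x}) = \sum_{i\in V} f_i(x_i)$, so that the constraint $L\tilde{x}=b$ is exactly $W^{1/2}\tilde{x}=0$. Since $W^{1/2}$ is symmetric, $L^*L = W$ and $c = L^* b = 0$, hence the PriLiCoSGD recursion reduces to $t^{k+1}=x^k-\gamma g^{k+1}$, $a^{k+1}=a^k+\tau W(t^{k+1}-\gamma a^k)$, $x^{k+1}=t^{k+1}-\gamma a^{k+1}$, with iterates in $\mathcal{X}^V$.

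Next I would verify that this recursion, written block-wise, is precisely DESTROY. Using $W = \widehat{W}\otimes I$, the $i$-th block of $W(t^{k+1}-\gamma a^k)$ equals $\sum_{j}\widehat{W}_{i,j}(t_j^{k+1}-\gamma a_j^k)$; peeling off the diagonal term $\widehat{W}_{i,i}(t_i^{k+1}-\gamma a_i^k)$ and using that $\widehat{W}_{i,j}\neq 0$ only for neighbours reproduces the per-node update lines of DESTROY. I would also check that the hypotheses transfer: $F$ is $\nu$-smooth since each $f_i$ is and $F$ is block-separable; $\|L\|^2 = \|W^{1/2}\|^2 = \|W\| = \|\widehat{W}\|$, so $\tau\gamma\|L\|^2\leq 1$ becomes $\tau\gamma\|\widehat{W}\|\leq 1$; and the initialization $\sum_{i\in V}a_i^0=0$ is exactly the statement $a^0\in\ran(W)=\ran(L^*)$, so that $a^0 = L^* y^0$ for some $y^0$ and PriLiCoSGD's $a^k$ coincides with $L^* y^k$ along the LiCoSGD trajectory.

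With this identification, Theorem~\ref{th:lico-cv} yields $\tilde{x}^k\to\tilde{x}^\star$ for some solution $\tilde{x}^\star$ of~\eqref{eq:dec2} and $y^k \to y^\star$ for some dual solution. The one genuinely new step is to read off the limit: any feasible $\tilde{x}^\star$ satisfies $W^{1/2}\tilde{x}^\star=0$, hence $\tilde{x}^\star\in\ker(W)$; since $\ker(\widehat{W})=\Span([1\ \cdots\ 1]^{\mathrm{T}})$, we get $\ker(W)=\{(v,\ldots,v):v\in\mathcal{X}\}$, so $\tilde{x}^\star=(x^\star,\ldots,x^\star)$ and, by the equivalence of~\eqref{eq:dec2} with~\eqref{eq:decentralized}, $x^\star$ minimizes $\sum_{i\in V}f_i$. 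This gives the common limit $x_i^k\to x^\star$. For the dual iterates I would use continuity, $a^k=L^* y^k\to L^* y^\star$, together with the optimality condition~\eqref{eq:saddle0} specialized to $R=0$, $H=\iota_0$, which at the limit gives $\nabla F(\tilde{x}^\star)+L^* y^\star=0$; thus $L^* y^\star=-\nabla F(\tilde{x}^\star)=\big(-\nabla f_1(x^\star),\ldots,-\nabla f_N(x^\star)\big)$, giving $a_i^k\to-\nabla f_i(x^\star)$ block-wise.

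The main obstacle is organizational rather than analytical: the convergence is inherited wholesale from Theorem~\ref{th:lico-cv}, so the work lies in (i) matching the Kronecker-product operator $W=\widehat{W}\otimes I$ to DESTROY's explicit coordinate updates without sign or indexing errors, and (ii) establishing that the lifted feasible set intersected with the solution set collapses to the consensus subspace, which is what converts ``solution of~\eqref{eq:dec2}'' into ``common minimizer of $\sum_{i\in V} f_i$.'' No new inequality or Lyapunov argument is needed.
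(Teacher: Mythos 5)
Your proposal is correct and follows essentially the same route as the paper: the paper obtains Theorem~\ref{th:destroy-cv} by specializing Theorem~\ref{th:lico-cv} (via PriLiCoSGD with $a^k=L^*y^k$) to the lifted problem~\eqref{eq:dec2} with $L=W^{1/2}$, $b=0$, and reads off the consensus limit from $\ker(\widehat{W})=\Span([1\ \cdots\ 1]^{\mathrm{T}})$. Your additional checks (block-wise matching of the $W=\widehat{W}\otimes I$ update, $\|L\|^2=\|\widehat{W}\|$, and $\sum_i a_i^0=0$ being equivalent to $a^0\in\ran(L^*)$) are exactly the details the paper leaves implicit.
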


Theorem \ref{th:cvx:lico} can be applied to the stochastic case, stating $\cO(1/k)$ convergence of the Lagrangian gap, by setting  $\mathcal{Y}=\mathcal{X}$ and $L=L^* = W^{1/2}$. 
Similarly, Theorem \ref{th:LV0} yields linear convergence of DESTROY in the strongly convex case $\mu>0$, with $L^*L$  replaced by $W$ and $\|L\|^2$ replaced by $\|W\|=\|\widehat{W}\|$. In particular, in the deterministic case,  
with $\gamma=1/\nu$ and $\tau\gamma =\aleph/\|W\|$ for some fixed $\aleph\in(0,1)$, 
$\varepsilon$-accuracy is reached after
   $\cO\Big(\max\big(\frac{\nu}{\mu},\frac{ \|W\|}{\omega(W)}\big)\log\big(\frac{1}{\varepsilon}\big)\Big)$ iterations. 
      This rate is better or equivalent to the one of recently proposed decentralized algorithms,    like EXTRA, DIGing, NIDS, NEXT, Harness, Exact Diffusion, see Table 1 of~\cite{xu2020distributed}, \cite[Theorem 1]{li20} and \cite{alg21}. With a stochastic gradient, the rate of our algorithm is also better than~\cite[Equation 99]{mokhtari2016dsa}.

In follow-up papers, the authors used Nesterov acceleration to propose accelerated versions of DESTROY~\cite{kov20} and PriLiCoSGD~\cite{sal21}.

 \bibliographystyle{spmpsci}
 
\bibliography{math,IEEEabrv,biblio}

\end{document}